\documentclass[11pt]{aims}
\usepackage{amsmath, amssymb, mathrsfs}
  \usepackage{paralist}
  \usepackage{graphics} 
  \usepackage{epsfig} 
\usepackage{graphicx}  \usepackage{epstopdf}
 \usepackage[colorlinks=true]{hyperref}
 \hypersetup{urlcolor=blue, citecolor=red}
 \usepackage[toc,page]{appendix}
\usepackage{chngcntr}

\usepackage{titlesec}
\titleformat{\section}{\Large\bfseries}{\thesection.}{4pt}{}
\titleformat{\subsection}{\large\bfseries}{\thesection.\arabic{subsection}.}{4pt}{}
\titleformat{\subsubsection}{\bfseries}{\thesection.\arabic{subsection}.\arabic{subsubsection}.}{4pt}{}
\titleformat*{\paragraph}{\bfseries}
\titleformat*{\subparagraph}{\bfseries}
\setcounter{secnumdepth}{3}

\usepackage[margin=1.2in]{geometry}


\newtheorem{theorem}{Theorem}[section]

\newtheorem{lemma}[theorem]{Lemma}
\newtheorem{proposition}[theorem]{Proposition}
\theoremstyle{definition}
\newtheorem{definition}[theorem]{Definition}
\newtheorem{remark}[theorem]{Remark}

\newcommand{\Rb}{\mathbb{R}}
\newcommand{\Lc}{\mathcal{L}}
\newcommand{\Hc}{\mathcal{H}}
\newcommand{\Vc}{\mathcal{V}}
\newcommand{\Sc}{\mathcal{S}}

\newcommand{\Oc}{\mathcal{O}}

\newcommand{\Cc}{\mathcal{C}}
\newcommand{\Mc}{\mathcal{M}}

\newcommand{\Dc}{\mathcal{D}}



\numberwithin{equation}{section}

\title[Type I blowup solutions for non-variational semilinear parabolic systems] 
      {Construction and Stability of type I blowup solutions for non-variational semilinear parabolic systems}

\author[T. Ghoul, V. T. Nguyen, H. Zaag]{}
\subjclass{Primary: 35K50, 35B40; Secondary: 35K55, 35K57.}
 \keywords{Blowup solution, Blowup profile, Stability, Semilinear parabolic system}

 \email[T. Ghoul]{teg6@nyu.edu}
 \email[V. T. Nguyen]{Tien.Nguyen@nyu.edu}
 \email[H. Zaag]{Hatem.Zaag@univ-paris13.fr}

\thanks{H. Zaag is supported by the ANR project ANA\'E ref. ANR-13-BS01-0010-03. \\ --------------------\\ 
\today}
\begin{document}
\maketitle

\centerline{\scshape Tej-Eddine Ghoul$^\dagger$, Van Tien Nguyen$^\dagger$ and Hatem Zaag$^\ast$}
\medskip
{\footnotesize
 \centerline{$^\dagger$New York University in Abu Dhabi, P.O. Box 129188, Abu Dhabi, United Arab Emirates.}
  \centerline{$^\ast$Universit\'e Paris 13, Sorbonne Paris Cit\'e, LAGA, CNRS (UMR 7539), F-93430, Villetaneuse, France.}
}

\bigskip

\begin{abstract} We consider in this note the semilinear heat system 
$$\partial_t u = \Delta u + f(v), \quad \partial_t v = \mu\Delta v + g(u), \quad \mu > 0,$$
where the nonlinearity has no gradient structure taking of the particular form 
$$f(v) = v|v|^{p-1} \quad \textup{and}\quad g(u) = u|u|^{q-1} \quad  \textup{with} \quad p, q > 1, $$
or

$$f(v) = e^{pv}\quad \textup{and} \quad g(u) = e^{qu} \quad \textup{with} \quad p,q > 0.$$
We exhibit type I blowup solutions for this system and give a precise description of its blowup profiles. The method relies on two-step procedure: the reduction of the problem to a finite dimensional one via a spectral analysis, then solving the finite dimensional problem by a classical topological argument based on index theory. As a consequence of our technique, the constructed solutions are stable under a small perturbation of initial data. The results and the main arguments presented in this note can be found in our papers \cite{GNZihp18,GNZjde18}.  
\end{abstract}

\section{Introduction.}
In \cite{GNZihp18,GNZjde18}, we consider the semilinear parabolic system 
\begin{equation}\label{sys:uv}
\arraycolsep=1.4pt\def\arraystretch{1.6}
\left\{ \begin{array}{rl}
\partial_t u &= \;\;\Delta u + f(v), \\
\partial_t v &= \mu\Delta v + g(u), 
\end{array}
\right. \quad (x,t) \in \Rb^+ \times \Rb^N, 
\end{equation}
with $N \geq 1$, $\mu > 0$ and $\big(u(0),v(0)\big) = \big(u_0, v_0\big)$, where $(u,v)(t): x \in \Rb^N \to \Rb^2$ and the nonlinearity has no gradient structure taking of the particular form 
\begin{equation}\label{def:fg1}
f(v) = v|v|^{p-1} \quad \textup{and}\quad g(u) = u|u|^{q-1} \quad  \textup{with} \quad p, q > 1, 
\end{equation}
or
\begin{equation}\label{def:fg2}
f(v) = e^{pv}\quad \textup{and} \quad g(u) = e^{qu} \quad \textup{with} \quad p,q > 0.
\end{equation}
System \eqref{sys:uv} represents a simple model of a reaction-diffusion system describing heat propagation in a two-component combustible mixture and, as such, it has been the subject of intensive investigation from the last two decades (see \cite{SOUbook05}, \cite{ZZCna02} and references therein). We are here mainly interested in proving the existence and stability of finite time blowup solutions satisfying some prescribed asymptotic behavior. By finite time blowup, we mean that $T = T(u_0, v_0)$, the maximal existence  time of the classical solution $(u,v)$ of problem \eqref{sys:uv}, is finite, and the solution blows up in finite time $T$ in the sense that 
$$\lim_{t \to T}\big(\|u(t)\|_{L^\infty(\Rb^N)} + \|v(t)\|_{L^\infty(\Rb^N)} \big) = +\infty.$$ 
Moreover, a finite blowup solution $(u,v)$ of system \eqref{sys:uv} is called \textit{Type I} if there exists some positive constant $C$ such that 
\begin{equation}\label{def:TypeI}
\|u(t)\|_{L^\infty(\Rb^N)} \leq C\bar u(t), \quad \|v(t)\|_{L^\infty(\Rb^N)} \leq C \bar v(t),
\end{equation}
where $(\bar u, \bar v)(t)$ is the unique positive blowup solution of the ordinary differential system associated to \eqref{sys:uv}, namely that 
\begin{eqnarray*}
& \bar u(t) = \Gamma (T-t)^{-\alpha}, \quad \bar v(t) = \gamma(T- t)^{-\beta} & \quad \textup{for \eqref{def:fg1}},\\
& \bar u(t) = \ln\left[\big(p(T-t)\big)^\frac{-1}{q}\right], \quad \bar v(t) = \ln\left[\big(q(T-t)\big)^\frac{-1}{p}\right]& \quad \textup{for \eqref{def:fg2}},
\end{eqnarray*}
where $(\Gamma, \gamma)$ is determined by 
\begin{equation}\label{def:Gg}
\gamma^p = \alpha \Gamma, \quad \Gamma^q =  \gamma \beta, \quad \alpha = \frac{p + 1}{pq-1}, \quad \beta = \frac{q+1}{pq-1}.
\end{equation}
Otherwise, the blowup solution is of \textit{Type II}.

As for system  \eqref{sys:uv}-\eqref{def:fg1} with $\mu = 1$, the existence of finite time blowup solutions was derived by Friedman-Giga \cite{FGfsut87}, Escobedo-Herrero \cite{EHjde91} (see also \cite{EHpams91}, \cite{EHampa93}, etc). From Andreucci-Herrero-Vel\'azquez \cite{AHVihp97}, we know that estimate \eqref{def:TypeI} holds true if 
$$pq > 1, \quad q(pN - 2)_+ < N + 2 \quad \textup{or} \quad p(qN - 2)_+ < N + 2.$$
See also Caristi-Mitidieri \cite{CMjde94}, Deng \cite{Dzamp96}, Fila-Souple \cite{FSnodea01} for more results relative to estimate \eqref{def:TypeI}. Knowing that the solution exhibits Type I blowup, the authors of \cite{AHVihp97} were able to obtain more information about the asymptotic behavior of the solution near the singularity. Their results were later improved by Zaag \cite{Zcpam01}. When $\mu \ne 1$, much less result has been known, a part from Mahmoudi-Souplet-Tayachi \cite{MSTjde15} who establishes a single point blowup result that improves the one obtained in \cite{FGfsut87}. As for system \eqref{sys:uv} coupled with the nonlinearity \eqref{def:fg2}, the only known result is due to Souplet-Tayachi \cite{STna16} who adapted the technique developed in \cite{MSTjde15} to obtain the single point blowup result for a class of radially descreasing solutions. To our knowledge, there are no results concerning the asymptotic behavior, even for the equidiffusive case, i.e. $\mu = 1$. Also we recall that the study of the non-equidiffusive parabolic system \eqref{sys:uv} ($\mu$ may or may not equal to $1$) are in particular much more involved, both in terms of behavior of solutions and at the technical level.

In this note we exhibit \textit{Type I} blowup solutions for system \eqref{sys:uv} and give the first complete description of its asymptotic behavior. More precisely, we prove in \cite{GNZihp18} the following theorem.
\begin{theorem}[Type I blowup solutions for \eqref{sys:uv}-\eqref{def:fg1} and its asymptotic behavior, \cite{GNZihp18}] \label{theo:1}
Let $a \in \Rb^N$ and $T > 0$. There exist initial data $(u_0, v_0) \in L^\infty(\Rb^N) \times L^\infty(\Rb^N)$ for which system \eqref{sys:uv}-\eqref{def:fg1} has the unique solution $(u,v)$ defined on $\Rb^N \times [0, T)$ such that
\begin{itemize}
\item[(i)] The solution $(u,v)$ blows up in finite time $T$ at the only point $a$.
\item[(ii)] \textup{(Asymptotic profile)} There holds for all $t \in [0, T)$,
\begin{equation}\label{eq:asymuv1}
\left\|(T-t)^\alpha u(x,t) - \Phi^*(z)\right\|_{L^\infty(\Rb^N)} + \left\|(T-t)^\beta v(x,t) - \Psi^*(z)\right\|_{L^\infty(\Rb^N)} \leq \frac{C}{\sqrt{(T-t)}},
\end{equation}
where $z = \frac{x - a}{\sqrt{(T-t)|\log (T-t)|}}$ and the profiles $\Phi_0$ and $\Psi_0$ are explicitly given by
\begin{equation}\label{def:PhiPsi1}
\forall z \in \Rb^N, \quad \Phi^*(z) = \Gamma\big(1 + b |z|^2\big)^{-\alpha}, \quad \Psi^*(z) = \gamma\big(1 + b |z|^2\big)^{-\beta},
\end{equation} 
with $\Gamma, \gamma, \alpha, \beta$ being introduced in \eqref{def:Gg} and 
\begin{equation}
b = b(\mu, p,q) = \frac{(pq - 1)(2pq + p + q)}{4pq(p+1)(q+1)(\mu + 1)} > 0.
\end{equation}
\item[(iii)] \textup{(Final blowup profile)} For all $x \neq a$, $\big(u(x,t), v(x,t) \big) \to \big( u^*(x), v^*(x)\big) \in \Big[\Cc^2\big(\Rb^N \setminus \{0\} \big)\Big]^2$ with 
\begin{equation}\label{eq:finalpro}
u^*(x) \sim \Gamma\left(\frac{b|x- a|^2}{2 \big|\log |x-a|\big|} \right)^{-\frac{p + 1}{pq-1}}, \quad v^*(x) \sim \gamma\left(\frac{b|x- a|^2}{2 \big|\log |x-a|\big|} \right)^{-\frac{q + 1}{pq-1}}
\end{equation} 
as $|x - a| \to 0$.
\end{itemize}
\end{theorem}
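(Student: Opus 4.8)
The plan is to adapt the now-classical framework of Bricmont--Kupiainen and Merle--Zaag for the scalar semilinear heat equation to the system setting. First I would introduce self-similar variables centered at the (yet unknown) blowup point $a$ and time $T$: set
\[
(y,s) = \left(\frac{x-a}{\sqrt{T-t}}, -\log(T-t)\right), \quad w_1(y,s) = (T-t)^\alpha u(x,t), \quad w_2(y,s) = (T-t)^\beta v(x,t),
\]
so that the profiles $\Phi^*,\Psi^*$ become the natural attractors. In these variables the system becomes an autonomous (up to the diffusion coefficient $\mu$) reaction-diffusion system for $(w_1,w_2)$ posed on $\Rb^N\times[-\log T,\infty)$, and type I blowup at $a$ with the stated profile amounts to showing $(w_1,w_2)(s)\to(\Phi^*_\infty,\Psi^*_\infty)$ with the quantitative rate in \eqref{eq:asymuv1}, where the $|\log(T-t)|$ scaling of $z$ dictates that one must work at the level of the variable $y/\sqrt{s}$.

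Next I would linearize around the constant steady state $(\kappa_1,\kappa_2) := (\Gamma,\gamma)$ (the value of the profile at $z=0$) and analyze the resulting linear operator, which is a vector version of $\Lc = \Delta - \tfrac12 y\cdot\nabla + \text{(coupling matrix)}$ acting componentwise with weights $1$ and $\mu$. The key spectral input is that this operator, on the Gaussian-weighted space, has a finite number of nonnegative eigenvalues — a one-dimensional ``zero mode'' direction coming from the coupling eigenvalue and $(N{+}1)$-dimensional positive directions (the constant mode and the $N$ first Hermite modes $y_j$), corresponding to the invariances of translation in time and space — while the remaining spectrum is strictly negative. This diagonalization is exactly where the non-variational structure costs work: the coupling matrix $\begin{pmatrix}0 & p\gamma^{p-1}\\ q\Gamma^{q-1} & 0\end{pmatrix}$ is not symmetric, so one must pass to a well-chosen linear combination of $w_1,w_2$ (using $\gamma^p=\alpha\Gamma$, $\Gamma^q=\gamma\beta$) that puts the principal part in Jordan form; the constant $b$ in \eqref{def:PhiPsi1} is then pinned down by the solvability (Fredholm) condition on the neutral mode, exactly as the null eigenvalue $0$ forces a logarithmic-in-$s$ correction.

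Then I would set up the topological ``shooting'' argument: define a shrinking set $V_A(s)$ of perturbations $(q_1,q_2) = (w_1-\Phi^*,w_2-\Psi^*)$ in which the projections onto the positive and null modes are controlled by $s^{-1}$-type bounds and the infinite-dimensional part decays faster, parametrize a finite-dimensional family of initial data by the coordinates $(d_0,d_1)\in\Rb\times\Rb^N$ conjugate to the $(N{+}1)$ expanding directions (these encode the freedom to choose $T$ and $a$), show via the equations that the perturbation cannot exit $V_A(s)$ through the stable/null boundary (parabolic regularity plus the spectral gap and careful treatment of the quadratic nonlinear terms, splitting $\Rb^N$ into the region $|y|\le K_0\sqrt s$ and its complement), and finally show that on the expanding boundary the flow is strictly outgoing — a transversality property. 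A Brouwer/index-theory argument (a continuous map from a ball to its boundary cannot exist, or the degree is $\pm1$) then yields a choice of $(d_0,d_1)$, hence of $(T,a)$, for which the solution stays trapped for all $s$, giving (i) and (ii). Part (iii), the final profile, follows from (ii) by a standard ODE-comparison / matched-asymptotics argument on the outer region, integrating the equation along characteristics as $t\to T$ for $x\ne a$ and using the parabolic smoothing to upgrade to $\Cc^2$ convergence.

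The main obstacle I expect is the spectral analysis of the linearized operator in Step 2: diagonalizing the non-self-adjoint coupling, correctly identifying the dimension and nature of the neutral/expanding eigenspaces, and — most delicately — controlling the quadratic nonlinearity $f(v)-f(\Psi^*)$, $g(u)-g(\Phi^*)$ in the $(w_1,w_2)$ coordinates, where the different diffusion rates $1$ and $\mu$ prevent any naive energy identity and force one to track each component separately through the inner and outer regions with matched polynomial-in-$s$ weights.
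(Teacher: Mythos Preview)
Your proposal is correct and follows essentially the same route as the paper: similarity variables, spectral decomposition of $\Hc+\Mc_1$ with $(N{+}1)$ expanding modes and a neutral $n=2$ mode fixing $b$, a shrinking set split into inner ($|y|\lesssim K\sqrt s$) and outer regions, initial data parametrized by $(d_0,d_1)\in\Rb^{1+N}$, reduction to finite dimensions, and a degree argument. Two small corrections worth noting: first, $(T,a)$ are prescribed and $(d_0,d_1)$ are pure shooting parameters in the initial data, not a choice of $(T,a)$ --- their link to the time/space invariances is what later gives stability, not part of the construction; second, the paper linearizes not around $(\Phi^*,\Psi^*)$ but around a corrected profile $(\varphi,\psi)=(\Phi^*,\Psi^*)+O(1/s)$ (see \eqref{def:pro1}), which is needed so that the generated error $(R_1,R_2)$ has small enough projection on the neutral mode.
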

\begin{remark} The asymptotic profile defined in \eqref{def:PhiPsi1} with $\mu = 1$ is the one among the classification result established in \cite{AHVihp97} (see also \cite{Zcpam01}). This is to say that we can construct Type I blowup solutions for \eqref{sys:uv}-\eqref{def:fg1} verifying the other asymptotic profiles described as in \cite{AHVihp97}. However, those constructions would be simpler than our considered case \eqref{eq:asymuv1} which involves some logarithmic correction to the blowup variable. 
\end{remark}
\begin{remark} The estimate \eqref{eq:finalpro} is sharp in comparison with the result established in \cite{MSTjde15} (see Theorem 1.3) where the authors could only obtain lower pointwise estimates without the logarithmic correction.
\end{remark}

As for system \eqref{sys:uv} coupled with the nonlinearity \eqref{def:fg2}, we study in the special affine space $\Hc_\alpha$ for some positive constant $\alpha$, 
\begin{equation*}
\Hc_\alpha = \big\{(u,v) \in (\bar \phi, \bar \psi) + L^\infty(\Rb^N) \times L^\infty(\Rb^N) \; \textup{where} \; q\bar \phi(x) = p \bar \psi(x) = - \ln(1 + \alpha|x|^2) \big\}, 
\end{equation*} 
and establish in \cite{GNZjde18} the following result:
\begin{theorem}[Type I blowup solutions for \eqref{sys:uv}-\eqref{def:fg2} and its asymptotic behavior, \cite{GNZjde18}] \label{theo:2} Let $a \in \Rb^N$ and $T > 0$. There exist initial data $(u_0, v_0) \in \Hc_\alpha$ for which system \eqref{sys:uv}-\eqref{def:fg2} has the unique solution $(u,v)$ defined on $\Rb^N \times [0, T)$ such that
\begin{itemize}
\item[(i)] The function $(e^{qu},e^{pv})$ blows up in finite time $T$ at the only point $a$.
\item[(ii)] \textup{(Asymptotic profile)} There holds for all $t \in [0, T)$,
\begin{equation}\label{eq:asym2}
\left\|(T-t)e^{qu(x,t)} - \Phi_*(z)\right\|_{L^\infty(\Rb^N)} + \left\|(T-t)e^{pv(x,t)} - \Psi_*(z)\right\|_{L^\infty(\Rb^N)} \leq \frac{C}{\sqrt{(T-t)}},
\end{equation}
where $z = \frac{x - a}{\sqrt{(T-t)|\log (T-t)|}}$ and the profiles $\Phi_*$ and $\Psi_*$ are explicitly given by
\begin{equation}\label{def:PhiPsi2}
\forall z \in \Rb^N, \quad  p\Phi_*(z) =  q\Psi_*(z) = \frac{1}{1 + b |z|^2} \quad \text{with} \;\;  b = \frac{1}{2(\mu + 1)}.
\end{equation} 
\item[(iii)] \textup{(Final blowup profile)} For all $x \neq a$, $\big(u(x,t), v(x,t) \big) \to \big( u_*(x), v_*(x)\big) \in \Big[\Cc^2\big(\Rb^N \setminus \{0\} \big)\Big]^2$ with 
\begin{equation}
u_*(x) \sim \frac 1 q \ln\left(\frac{2b}{p} \frac{\big|\log |x-a|\big|}{|x- a|^2} \right), \quad v_*(x) \sim \frac 1 p \ln\left(\frac{2b}{q} \frac{\big|\log |x-a|\big|}{|x- a|^2} \right) \quad \text{as}\; \; |x - a| \to 0.
 \end{equation} 
\end{itemize}
\end{theorem}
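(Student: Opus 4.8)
\emph{Step 1 (self-similar variables and approximate profile).} The plan is to run the now-classical reduction-to-finite-dimensions scheme of Bricmont--Kupiainen and Merle--Zaag, adapted to the coupled non-equidiffusive system. After a translation take $a=0$, introduce the blowup variables $y=x/\sqrt{T-t}$, $s=-\ln(T-t)$, and set $u(x,t)=-\frac1q\ln(T-t)+\Phi(y,s)$, $v(x,t)=-\frac1p\ln(T-t)+\Psi(y,s)$, which turns \eqref{sys:uv}--\eqref{def:fg2} into the autonomous system $\partial_s\Phi=\mathcal L_1\Phi-\frac1q+e^{p\Psi}$, $\partial_s\Psi=\mathcal L_\mu\Psi-\frac1p+e^{q\Phi}$, with $\mathcal L_1=\Delta-\frac12 y\cdot\nabla$ and $\mathcal L_\mu=\mu\Delta-\frac12 y\cdot\nabla$. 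The constant steady state $(\Phi_0,\Psi_0)=(-\frac1q\ln p,-\frac1p\ln q)$ corresponds to the ODE blowup, and part (ii) amounts to $(\Phi,\Psi)(\cdot,s)\to(\Phi_0,\Psi_0)$ as $s\to\infty$ at the refined scale $z=y/\sqrt s$. Guided by $(1+b|z|^2)^{-1}=1-b|y|^2/s+O(s^{-2})$ one takes as approximate profile
\[
\Phi_{\mathrm{app}}(y,s)=-\tfrac1q\ln p-\tfrac1q\ln\!\bigl(1+\tfrac{b|y|^2}{s}\bigr),\qquad \Psi_{\mathrm{app}}(y,s)=-\tfrac1p\ln q-\tfrac1p\ln\!\bigl(1+\tfrac{b|y|^2}{s}\bigr),
\]
with $b=\tfrac1{2(\mu+1)}$; substituting, one checks that this value of $b$ is precisely the one cancelling the $O(1/s)$ part of the generated error, leaving an error of size $O(s^{-2})$ in the Gaussian-weighted norms used below.

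\emph{Step 2 (linearization and spectral analysis --- the core).} Write $(\Phi,\Psi)=(\Phi_{\mathrm{app}}+q_1,\Psi_{\mathrm{app}}+q_2)$; then $q=(q_1,q_2)$ solves $\partial_s q=\mathcal M q+B(q)+E$, where $\mathcal M q=\bigl(\mathcal L_1 q_1+\tfrac pq q_2,\ \mathcal L_\mu q_2+\tfrac qp q_1\bigr)$ up to a potential that is $O(1/s)$ in the weighted norm (from linearizing $e^{p\Psi_{\mathrm{app}}},e^{q\Phi_{\mathrm{app}}}$), $B$ is quadratic, and $\|E\|=O(s^{-2})$. Everything hinges on the spectrum of the limiting operator $\mathcal M_0$. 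Since $\mu\ne1$, $\mathcal L_1$ and $\mathcal L_\mu$ are symmetric with respect to the \emph{different} Gaussian weights $e^{-|y|^2/4}$ and $e^{-|y|^2/(4\mu)}$, so there is no single weighted $L^2$ making $\mathcal M_0$ self-adjoint; instead one decomposes pairs of functions over Hermite-type polynomials and computes $\mathcal M_0$ degree by degree. On the span of $(\pi,0),(0,\pi)$ with $\pi$ homogeneous of degree $m$ the leading (diagonal) part of $\mathcal M_0$ is multiplication by the matrix with diagonal entries $-m/2$ and off-diagonal entries $p/q,q/p$, of eigenvalues $-m/2\pm1$, the coupling between different degrees being upper-triangular. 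Hence the nonnegative spectrum of $\mathcal M$ consists of $1$ (simple, $m=0$, eigenvector the constant $(p,q)$), $1/2$ (multiplicity $N$, $m=1$, eigenvectors $\propto y_i(p,q)$), and $0$ (multiplicity $N(N+1)/2$, $m=2$, eigenvectors $\propto(p,q)$ modulo $\mu$-dependent constants), all remaining eigenvalues being $\le-1/2$: a finite-dimensional expanding/neutral part and an infinite-dimensional part with spectral gap, whose semigroup decays like $e^{-s/2}$.

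\emph{Step 3 (shrinking set and topological argument).} Decompose $q=q_0+q_1+q_2+q_-$ along the degree-$0$, degree-$1$, degree-$2$ and remaining modes, and introduce a shrinking set $V_A(s)$ given by bounds of the form $|q_0|,|q_1|\le As^{-2}$, $|q_2|\le A(\ln s)s^{-2}$ and a suitable (smaller) bound on $q_-$ in a Gaussian-weighted norm, the exponents being chosen so the a priori estimates close. Prescribe initial data at $s_0=-\ln T$ of the form $(\Phi_{\mathrm{app}},\Psi_{\mathrm{app}})(\cdot,s_0)$ modified only in the $N+1$ expanding directions, $(q_0,q_1)(s_0)=(d_0,d_1)$ with $(d_0,d_1)$ in a small ball of $\Rb^{1+N}$, and verify such data lie in $\Hc_\alpha$ for a suitable $\alpha$ (because $\Phi_{\mathrm{app}}(x/\sqrt T,s_0)+\frac1q\ln(1+\alpha|x|^2)$ is bounded). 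Then, as long as $q(s)\in V_A(s)$, project the equation onto each spectral piece and combine the spectral gap, $\|E\|=O(s^{-2})$, $\|B(q)\|=O(\|q\|^2)$, and a second-order analysis of the neutral ODE (yielding the $(\ln s)s^{-2}$ bound for $q_2$) to show that every coordinate of $V_A$ other than $(q_0,q_1)$ is strictly improved, while $(q_0,q_1)$ can exit $V_A$ only across $|(q_0,q_1)|=As^{-2}$ and does so transversally. A standard argument --- if for every $(d_0,d_1)$ the solution left $V_A(s)$ in finite time, composing the flow with the projection onto this exit sphere would yield a continuous retraction of $\overline{B}_{\Rb^{1+N}}$ onto its boundary, contradicting Brouwer's theorem --- produces $(d_0,d_1)$ with $q(s)\in V_A(s)$ for all $s\ge s_0$, hence $q(s)\to0$; undoing the change of variables gives a solution satisfying (i) and (ii).

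\emph{Main obstacle and the final blowup profile (iii).} The hard part will be the spectral analysis of Step 2: with $\mu\ne1$ the operator $\mathcal M$ is not normal in any natural Hilbert space, so the Hermite eigenbasis of the scalar case is unavailable and the list of nonnegative eigenvalues together with the spectral gap must be established by hand while handling the two Gaussian scales simultaneously --- this is exactly where the non-equidiffusive nature of \eqref{sys:uv} is felt, and where $b=1/(2(\mu+1))$ is forced. The second delicate point is pinning the precise neutral-mode decay (the $(\ln s)s^{-2}$-versus-$s^{-2}$ dichotomy). For (iii), fix $x\ne0$: as $t\to T$ the point lies in the outer region $|y|\to\infty$; propagate the inner estimate out to the matching scale $|x|\sim\sqrt{(T-t)|\log(T-t)|}$ (where $z$ is of order one) via parabolic regularity and a comparison/ODE argument, then let $t\to T$ to read off $e^{qu_*(x)}\sim\tfrac{2b}{p}\tfrac{|\log|x||}{|x|^2}$ and $e^{pv_*(x)}\sim\tfrac{2b}{q}\tfrac{|\log|x||}{|x|^2}$, which is the stated asymptotics; $\Cc^2$ regularity away from $0$ follows from interior parabolic estimates since $(u,v)$ remains bounded there.
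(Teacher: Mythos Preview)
Your similarity variable differs from the paper's: you set $\Phi=u+\tfrac1q\ln(T-t)$ (additive), whereas the paper sets $\Phi=(T-t)e^{qu}$ (multiplicative), leading to system \eqref{sys:PhiPsi2}. Your choice does avoid the quasilinear term $|\nabla\Phi|^2/\Phi$ of \eqref{sys:PhiPsi2}, but it creates a difficulty you do not address. In your formulation the linearized operator in the outer zone $|y|\gg\sqrt s$ degenerates to $\mathrm{diag}(\Lc_1,\Lc_\mu)$, whose top eigenvalue is $0$; by contrast the paper's variable carries a $-\Phi$ term, so that in the outer zone (where $q\psi,\,p\phi\to 0$) the diagonal part becomes $\mathrm{diag}(\Lc_1-1,\Lc_\mu-1)$ with strictly negative top eigenvalue. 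That negative shift is precisely what makes the Merle--Zaag outer $L^\infty$ bound close in a one-region shrinking set; without it the Duhamel integral of the coupling term $p\,e^{p\Psi_{\mathrm{app}}}q_2$ (which is $O(s^{-1/2})$ on $|y|\sim K\sqrt s$) is not summable, and your bootstrap on $q_e$ fails. Related to this, your claim ``$B$ is quadratic'' is only true once $\|q\|_{L^\infty}$ is already small, which is what you are trying to prove; and your $\Phi_{\mathrm{app}}$ omits the $O(1/s)$ correction of \eqref{def:pro2}, so the generated error is $O(1/s)$, not $O(1/s^2)$.

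The paper takes the opposite trade-off: accept the gradient term and pay for it with a new ingredient you omit entirely, namely a \emph{three-region} shrinking set $\Sc(t)$ (Definition~\ref{def:St}) covering the blowup zone $\Dc_1$ ($|y|\le K_0\sqrt s$), an intermediate zone $\Dc_2$ ($\tfrac{K_0}{4}\sqrt s\le|y|\le\epsilon_0 e^{s/2}$), and a far zone $\Dc_3$. In $\Dc_2$ the solution is controlled through the rescaled functions $(\tilde u,\tilde v)$ of \eqref{def:uvtilde}, which solve the original system and are shown to track the explicit ODE solution \eqref{def:solUc}; this simultaneously tames $|\nabla\Phi|^2/\Phi$ where $\Phi$ is small and yields part~(iii) directly. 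Your sketch of (iii) (``propagate via parabolic regularity and a comparison/ODE argument'') is in spirit exactly this $\Dc_2$ analysis, but the paper builds it into the bootstrap rather than appending it a~posteriori. Finally, the obstacle you single out---the spectral analysis for $\mu\ne1$---is in fact routine here (Lemma~\ref{lemm:diagonal} gives explicit polynomial eigenfunctions and the spectrum $1-\tfrac n2$, $-(1+\tfrac n2)$); the genuine difficulty of the exponential case is the control of the gradient term, and that is what forces the three-region machinery.
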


\begin{remark} We can construct Type I blowup solutions for \eqref{sys:uv}-\eqref{def:fg2} satisfying different blowup profiles that do not have logarithmic correction to the blowup variable described as in \eqref{eq:asym2}. This is to say that we can obtain an analogous classification result for Type I blowup solutions of \eqref{sys:uv}-\eqref{def:fg2} by adapting the technique of \cite{AHVihp97} with some more technical difficulties.
\end{remark}

The proof of Theorem \ref{theo:1} and Theorem \ref{theo:2} relies on two-step procedure:
\begin{itemize}
\item Reduction of an infinite dimensional problem to a finite dimensional one, through either the spectral analysis of the linearized operator around the expected profile or the energy-type estimate via the derivation of suitable Lyapunov functional. Note that the energy-type method breaks down for our problem because of the non gradient structure of the nonlinearity.
\item The control of the finite dimensional problem thanks to a classical topological argument based on index theory.
\end{itemize}
This two-step procedure has been successfully applied for various nonlinear evolution equations to construct both Type I and Type II blowup solutions. It was the case of the semilinear heat equation treated in \cite{BKnon94}, \cite{MZdm97}, \cite{NZens16} (see also \cite{NZsns16}, \cite{DNZtjm18} for the case of logarithmic perturbations, \cite{Breiumj90}, \cite{Brejde92} and \cite{GNZjde17} for the exponential source, \cite{NZcpde15} for the complex-valued case), the Ginzburg-Landau equation in \cite{MZjfa08}, \cite{NZarma18} (see also \cite{ZAAihn98} for an earlier work). It was also the nonlinear Schr\"odinger equation both in the mass critical \cite{MRgfa03,MRim04, MRam05, MRcmp05} and mass supercritical \cite{MRRcjm15} cases; the energy critical \cite{DKMcjm13}, \cite{HRapde12} and supercritical \cite{Car161} wave equation; the mass critical gKdV equation \cite{MMRam14, MMRasp15, MMRjems15}; the two dimensional Keller-Segel model \cite{RSma14}; the energy critical and supercritical geometric equations: the wave maps \cite{RRmihes12} and \cite{GINjde18}, the Schr\"odinger maps \cite{MRRim13} and the harmonic heat flow \cite{RScpam13, RSapde2014} and \cite{GINapde18}; the semilinear heat equation in the energy critical \cite{Sjfa12} and supercritical \cite{Car16} cases.\\

As a consequence of our technique, we obtained the following stability result.
\begin{theorem} The constructed solutions described in Theorem \ref{theo:1} and Theorem \ref{theo:2} are stable with respect to initial data. 
\end{theorem}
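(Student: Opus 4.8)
We argue that the construction underlying Theorems~\ref{theo:1} and~\ref{theo:2} is, by its very nature, an \emph{open} statement. Concretely, the plan is to exhibit a neighborhood $\Vc_0$ of the constructed datum $(\hat u_0,\hat v_0)$ --- in $L^\infty(\Rb^N)\times L^\infty(\Rb^N)$ for the case \eqref{def:fg1}, in the affine space $\Hc_\alpha$ for the case \eqref{def:fg2} --- and a neighborhood $\Vc_1$ of $(\hat T,\hat a)$ in $(0,\infty)\times\Rb^N$, such that every solution $(u,v)$ issued from $(u_0,v_0)\in\Vc_0$ blows up at a single point $a^\ast=a^\ast(u_0,v_0)\in\Vc_1$, at a time $T^\ast=T^\ast(u_0,v_0)\in\Vc_1$ depending continuously on $(u_0,v_0)$, and satisfies conclusions (i)--(iii) of the relevant theorem with $(\hat T,\hat a)$ replaced by $(T^\ast,a^\ast)$. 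Since the limiting profiles --- in particular the constant $b$ --- are fixed, this is exactly the asserted stability. Normalizing $\hat a=0$, we recall from the construction the self-similar variables $W_{T,a}(y,s)=\big((T-t)^{\alpha}u(a+y\sqrt{T-t},t),(T-t)^{\beta}v(a+y\sqrt{T-t},t)\big)$ with $s=-\log(T-t)$ for \eqref{def:fg1} (and the analogous logarithmic change of unknown for \eqref{def:fg2}), the error $Q_{T,a}=W_{T,a}-(\text{rescaled profile})$, and the \emph{shrinking set} $V_A(s)$: a shrinking tube around the profile in which the only non-contracting directions are the finitely many modes of the matrix linearized operator with non-negative eigenvalue. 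The core output of the construction is that if $Q_{T,a}(s)\in V_A(s)$ for all $s\ge s_0$ then conclusions (ii)--(iii) hold and, moreover, $(u,v)$ blows up precisely at $(a,T)$ --- this last point relying on the Type~I and single-point a priori estimates of \cite{AHVihp97} for \eqref{def:fg1} and \cite{STna16} for \eqref{def:fg2}.

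The decisive structural fact is that these finitely many non-contracting modes are exactly those generated by the $(N+1)$-parameter modulation in $(T,a)$: differentiating the rescaling in $T$ spans the eigenvalue-$1$ direction and in $a_1,\dots,a_N$ spans the $N$-dimensional eigenvalue-$1/2$ direction, while the eigenvalue-$0$ (null) mode is not a free parameter but is driven by the logarithmic correction and pinned down, inside $V_A(s)$, by a direct ODE argument (it is what produces the precise value of $b$). Hence the construction, read for the base datum, furnishes a continuous map
\[
\Theta_{u_0,v_0}\colon\ (T,a)\in\Vc_1\ \longmapsto\ \Pi\,Q_{T,a}(s_0)\ \in\ \Rb^{N+1},
\]
where $\Pi$ projects onto the eigenvalue-$1$ and eigenvalue-$1/2$ modes, with the properties that $\Theta_{\hat u_0,\hat v_0}(\hat T,0)=0$, that $\Theta_{\hat u_0,\hat v_0}$ sends $\partial\Vc_1$ outside a small box around the origin with nonzero Brouwer degree there, and that on the boundary of that box the rescaled flow is \emph{strictly outgoing} in these $N+1$ coordinates as long as $Q_{T,a}$ stays in $V_A$. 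The outgoing-flux property concerns the equation near the profile only, hence is independent of the initial datum; and one has $\Theta_{u_0,v_0}=\Theta_{\hat u_0,\hat v_0}+O\big(\|(u_0,v_0)-(\hat u_0,\hat v_0)\|\big)$ while the remaining (stable plus null) part of $Q_{T,a}(s_0)$ is $O\big(\|(u_0,v_0)-(\hat u_0,\hat v_0)\|\big)+O(1/s_0^2)$. Choosing $\Vc_0$ small and $s_0$ large, the degree of $\Theta_{u_0,v_0}$ near $(\hat T,0)$ is still nonzero and the stable/null part of $Q_{T,a}(s_0)$ is still strictly interior to $V_A(s_0)$.

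Now fix $(u_0,v_0)\in\Vc_0$ and rerun the topological argument of the construction with $(T,a)$ in the role of the finite-dimensional shooting parameter: as long as $Q_{T,a}(s)\in V_A(s)$ the system is consistent, the stable and null directions stay strictly interior, and on the boundary of the finite-dimensional box the motion is outgoing; since $\Theta_{u_0,v_0}$ has nonzero degree near $(\hat T,0)$, the classical index argument (exactly as in the scalar case \cite{MZdm97}) yields $(T^\ast,a^\ast)=(T^\ast(u_0,v_0),a^\ast(u_0,v_0))\in\Vc_1$ with $Q_{T^\ast,a^\ast}(s)\in V_A(s)$ for all $s\ge s_0$; continuity of $(T^\ast,a^\ast)$ in $(u_0,v_0)$ is built into this degree-theoretic selection. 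By the trapping-implies-profile statement recalled above, $(u,v)$ then blows up at time $T^\ast$, only at the point $a^\ast$, and obeys estimates (ii)--(iii) of Theorem~\ref{theo:1} (resp. Theorem~\ref{theo:2}) centered at $(a^\ast,T^\ast)$. This is the claimed stability.

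The main obstacle is not the topological mechanism, which is robust by design and entirely parallel to \cite{MZdm97}, but the two system-specific inputs that this perturbation argument borrows from the construction, both of which are quantitative estimates near the profile and therefore survive a sufficiently small change of initial data: first, the spectral description of the \emph{matrix} linearized operator around the profile --- after diagonalizing the coupling between the $u$- and $v$-components one must know that its non-negative-eigenvalue subspace has exactly dimension $N+1$, with eigenfunctions genuinely produced by $(T,a)$-modulation, and that the complementary part is spectrally contracting in the weighted norms defining $V_A(s)$; second, the strict outgoing-flux estimate on the boundary of the finite-dimensional box for the coupled flow near the profile, uniformly for $Q\in V_A$, which in the non-equidiffusive regime $\mu\neq 1$ is the genuinely delicate computation. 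A final point, needed to pass from ``trapped in $V_A$'' to a statement about the \emph{actual} blowup time and point, is the Type~I lower bound on the blowup rate together with the single-point a priori estimates of \cite{AHVihp97} (for \eqref{def:fg1}) and \cite{STna16} (for \eqref{def:fg2}); and for Theorem~\ref{theo:2} the entire discussion, hence the meaning of stability, takes place inside the affine space $\Hc_\alpha$, so perturbations are understood to preserve the prescribed spatial tail.
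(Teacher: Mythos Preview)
Your proposal is essentially correct and follows the standard Merle--Zaag stability mechanism that the paper alludes to in the remark following the theorem (and that the full papers \cite{GNZihp18,GNZjde18,MZdm97} carry out in detail): since the construction controls the solution in $\Vc_A(s)$ by tuning exactly $N+1$ shooting parameters matching the $N+1$ positive eigenmodes, and since those modes are generated by translation in $(T,a)$, one reruns the topological argument for a perturbed datum with $(T,a)$ as the new shooting parameters.

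Two small corrections. First, the passage ``trapped in $\Vc_A$'' $\Rightarrow$ ``blows up exactly at $(T^\ast,a^\ast)$ with Type~I rate and single point'' does \emph{not} rely on the a~priori estimates of \cite{AHVihp97} or \cite{STna16}; it is internal to the construction. Trapping gives \eqref{eq:LUinVAest}, hence \eqref{eq:goal1}, hence Type~I directly; the single-point conclusion and identification of the blowup time come from the final-profile argument (part~(iii)), not from external a~priori bounds. Second, your phrase ``non-negative-eigenvalue subspace has exactly dimension $N+1$'' is not literally true: the null mode $\lambda_2=0$ adds further dimensions. What you mean --- and correctly say elsewhere --- is that only the \emph{positive} eigenmodes ($\lambda_0=1$, $\lambda_1=\tfrac12$, total dimension $1+N$) require a free parameter, while the null mode $\theta_2$ is controlled without one by the ODE $a_2'=c_\ast a_2^2+\ldots$ (this is the ``logarithmic correction'' you invoke). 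Keep that distinction sharp; otherwise the count of modulation parameters versus unstable directions does not match.
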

\begin{remark} The idea behind the stability result can be formally understood from the space-time and scaling invariance of the problem as follows: The linearized operator around the expected profile has two positive eigenvalues $\lambda_0 = 1, \lambda_1 = \frac{1}{2}$, a zero eigenvalue $\lambda_2 = 0$, then a an infinity discrete negative spectrum. From the analysis of stability of blowup problems, the component corresponding to $\lambda_0 = 1$ has the exponential growth $e^s$, which can be eliminated by a changing of the blowup time; similarly for the mode $\lambda_1 = \frac{1}{2}$ by a shifting of the blowup point; the neutral mode $\lambda_2 = 0$ usually has a polynomial growth and can be eliminated by using the scaling invariance of the problem. Since the remaining modes of the linearized operator corresponding to the negative spectrum decay exponentially, one derive the stability of the constructed solutions described in Theorem \ref{theo:1} and Theorem \ref{theo:2}. From the stability result, we expect that the blowup profiles \eqref{def:PhiPsi1} and \eqref{def:PhiPsi2} are generic, i.e. the other blowup profiles are unstable. In our opinion, this is a difficult open question whose a partly particular answer was given by Herrero-Vel\'azquez \cite{HVasnsp92} for the one dimensional semilinear heat equation.
\end{remark}

\section{A formal computation of the blowup profile.}
We brieftly recall in this section the formal approaches in \cite{GNZihp18,GNZjde18} to construct a suitable approximate blowup profile for system \eqref{sys:uv}. Similar approaches can be found in \cite{TZnor15, TZpre15}, \cite{GNZjde17}, \cite{NZarma18} and references therein. The method is based on matched asymptotic expansions which mainly replies on the spectral properties of the linearized operator around an expected profile.   \\

\paragraph{Similarity variables:} We perform the well known change of variables
\begin{eqnarray}
&\Phi(y,s) = (T-t)^\alpha u(x,t), \quad \Psi(y,s) = (T-t)^\beta v(x,t) &\quad \textup{for}\; \eqref{def:fg1},\label{def:sim1} \\
&\Phi(y,s) = (T-t) e^{qu(x,t)}, \quad \Psi(y,s) = (T-t) e^{pv(x,t)} &\quad \textup{for}\; \eqref{def:fg2},\label{def:sim2}
\end{eqnarray}
where $\alpha, \beta$ are introduced in \eqref{def:Gg} and 
\begin{equation*}
y = \frac{x}{\sqrt{T-t}}, \quad s = -\log(T-t). 
\end{equation*}
In this way, $(\Phi, \Psi)$ satisfies the new system
\begin{eqnarray}
\left\{\arraycolsep=1.6pt\def\arraystretch{2}
\begin{array}{ll}
\partial_s\Phi &= \Lc_1 \Phi - \alpha\Phi + |\Psi|^{p-1}\Psi,\\
\partial_s\Psi &= \Lc_\mu \Psi - \beta\Psi + |\Phi|^{p-1}\Phi,
\end{array} \right.  &\quad \textup{for \eqref{def:fg1},} \label{sys:PhiPsi1}\\
\left\{\arraycolsep=1.6pt\def\arraystretch{2}
\begin{array}{l}
\partial_s \Phi = \Lc_1 \Phi - \Phi - \dfrac{|\nabla \Phi|^2}{\Phi} + q \Phi \Psi,\\
\partial_s \Psi = \Lc_\mu \Psi - \Psi - \mu\dfrac{|\nabla \Psi|^2}{\Psi} + p \Phi \Psi,\end{array}\right. &\quad  \textup{for \eqref{def:fg2},} \label{sys:PhiPsi2}
\end{eqnarray}
where 
\begin{equation}\label{def:Leta}
\Lc_\eta f = \eta \Delta f - \frac{y}{2}\cdot \nabla f = \frac{\eta}{\rho_\eta} \nabla \cdot \big(\rho_\eta \nabla f\big) \quad \textup{with} \quad \eta \in \{1, \mu\},
\end{equation}
is the self-adjoint operator with respect to the Hilbert space $L^2_{\rho_\eta}(\Rb^N, \Rb)$ equipped with the inner product
$$\big<f,g\big>_{L^2_{\rho_\eta}} = \int_{\Rb^N}f(y) g(y) \rho_\eta(y) dy \quad \textup{with} \quad \rho_\eta(y) = \frac{1}{(4\pi)^{N/2}}e^{-\frac{|y|^2}{4\eta}}.$$

\paragraph{Linearized problem and spectral properties of the associated linearized operator:} Note that the nonzero constant solutions to systems \eqref{sys:PhiPsi1} and \eqref{sys:PhiPsi2} are $(\Gamma, \gamma)$ and $(1/p, 1/q)$ respectively. This suggests the linearization 
\begin{equation}\label{eq:linPhiPsicons}
\big(\bar \Phi, \bar \Psi\big) = \big(\Phi - \Gamma, \Psi - \gamma\big)\;\; \textup{for \eqref{def:fg1}} \quad \textup{and} \quad \big(\bar \Phi, \bar \Psi\big) = \big(\Phi - 1/p, \Psi - 1/q\big) \;\; \textup{for \eqref{def:fg2}}, 
\end{equation}
where $(\bar \Phi, \bar \Psi)$ solves the system 
\begin{equation}\label{sys:barPhiPsi}
i = 1, 2, \quad \partial_s \binom{\bar \Phi}{\bar \Psi} = \left(\Hc + \Mc_i\right)\binom{\bar \Phi}{\bar \Psi}  + \binom{Q_{i,1}}{Q_{i,2}},
\end{equation}
where $i = 1$ stands for the polynomial nonlinearity \eqref{def:fg1} and $i = 2$ for the exponential case \eqref{def:fg2}, $Q_{i, 1}$ and $Q_{i,2}$ are built to be quadratic, and the linear operator $\Hc$ and matrices $\Mc_i$'s are defined by 
\begin{equation}\label{def:Hc}
\Hc = \left(\begin{matrix}
\Lc_1 & 0\\ 0 &\Lc_\mu
\end{matrix} \right), \quad  \Mc_1 = \begin{pmatrix}
 -\alpha &\; p\gamma^{p-1}\\
q\Gamma^{q-1} &\; -\beta
\end{pmatrix}, \quad \Mc_2 = \left(\begin{matrix}
0 & q/p\\ p/q &0
\end{matrix} \right). 
\end{equation}
The following lemma gives the spectral properties of $\Hc + \Mc_i$. 
\begin{lemma}[Diagonalization of $\Hc + \Mc_i$] \label{lemm:diagonal}  For all $n \in \mathbb{N}$, there exist polynomials $f_n, g_n, \tilde{f}_n$ and $\tilde{g}_n$ of degree $n$ such that

\begin{equation}\label{eq:HMspec1}
\Big(\Hc+ \Mc_i\Big)\binom{f_n}{g_n} = \left(1 - \frac{n}{2}\right)\binom{f_n}{g_n}, \quad  \Big(\Hc+ \Mc_i\Big)\binom{\tilde{f}_n}{\tilde{g}_n} = \lambda^-_{i,n}\binom{\tilde{f}_n}{\tilde{g}_n},
\end{equation}
where
$$\lambda^-_{1,n} = -\left(\frac{n}{2} + \frac{(p+1)(q+1)}{pq-1}\right) , \quad \lambda^-_{2,n} = - \left(1 + \frac{n}{2}\right).$$
\end{lemma}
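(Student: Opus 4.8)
My plan is to turn the lemma into a finite–dimensional linear–algebra computation, using that $\Hc+\Mc_i$ acts on polynomials. First I would note that $\Lc_\eta$ maps $\Rb^2$–valued polynomials of degree $\le n$ into themselves and the constant matrix $\Mc_i$ preserves degree, so $\Hc+\Mc_i$ leaves invariant, for each $n\in\mathbb N$, the finite–dimensional space $V_n$ of such polynomials, and it suffices to diagonalise it on each $V_n$. For a homogeneous polynomial $P$ of degree $k$ Euler's identity gives $\tfrac{y}{2}\cdot\nabla P=\tfrac{k}{2}P$, whence $\Lc_\eta P=-\tfrac{k}{2}P+\eta\Delta P$ with $\eta\Delta P$ of degree $k-2$. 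Decomposing $V_n=W_n\oplus W_{n-1}\oplus\cdots\oplus W_0$ into homogeneous blocks ($W_k$ the $\Rb^2$–valued homogeneous polynomials of degree $k$), the operator $\Hc+\Mc_i$ is therefore block–triangular with diagonal block $-\tfrac{k}{2}\,I_2+\Mc_i$ on $W_k$; in particular its spectrum on $V_n$ is $\{-\tfrac{k}{2}+\lambda:\ 0\le k\le n,\ \lambda\in\mathrm{spec}(\Mc_i)\}$.

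Second, I would compute $\mathrm{spec}(\Mc_i)$, calling its two eigenvalues $\mu^+_i$ and $\mu^-_i$. For $\Mc_2$ the characteristic polynomial is $\lambda^2-1$, so $\mu^+_2=1$, $\mu^-_2=-1$. For $\Mc_1$, using $\gamma^{p}=\alpha\Gamma$ and $\Gamma^{q}=\gamma\beta$ from \eqref{def:Gg} one gets $\gamma^{p-1}\Gamma^{q-1}=\gamma^{p}\Gamma^{q}/(\gamma\Gamma)=\alpha\beta$, hence $\det\Mc_1=\alpha\beta-pq\,\gamma^{p-1}\Gamma^{q-1}=\alpha\beta(1-pq)=-\tfrac{(p+1)(q+1)}{pq-1}$ after substituting $\alpha=\tfrac{p+1}{pq-1}$, $\beta=\tfrac{q+1}{pq-1}$; since also $\mathrm{tr}\,\Mc_1=-(\alpha+\beta)$, a direct check shows $1$ is a root of the characteristic polynomial, so $\mu^+_1=1$ and $\mu^-_1=-\tfrac{(p+1)(q+1)}{pq-1}$. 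Consequently the spectrum of $\Hc+\Mc_i$ consists of the two families $-\tfrac{n}{2}+\mu^+_i=1-\tfrac{n}{2}$ and $-\tfrac{n}{2}+\mu^-_i=\lambda^-_{i,n}$, which is exactly the list in the statement.

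Third, to produce eigenfunctions whose components have degree exactly $n$: given $\lambda\in\{1-\tfrac n2,\ \lambda^-_{i,n}\}$ and the corresponding $\Mc_i$–eigenvector $\xi\in\Rb^2$, I would set $W:=P\,\xi$ with $P$ a nonzero homogeneous polynomial of degree $n$, observe that $(\Hc+\Mc_i-\lambda)W=(\Delta P)\,\mathrm{diag}(1,\mu)\,\xi\in V_{n-2}$, and then solve $(\Hc+\Mc_i-\lambda)R=-(\Hc+\Mc_i-\lambda)W$ for a correction $R\in V_{n-2}$, block by block in decreasing degree, inverting $-\tfrac{k}{2}I_2+\Mc_i-\lambda$ at the block of degree $k$. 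The pair $(f_n,g_n)=W+R$ (resp.\ $(\tilde f_n,\tilde g_n)=W+R$) is then the sought eigenfunction of degree $n$; when $\mu=1$ this collapses to the classical diagonalisation by Hermite polynomials tensored with the eigenvectors of $\Mc_i$.

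The hard part is that the block $-\tfrac{k}{2}I_2+\Mc_i-\lambda$ may be singular, namely at a \emph{resonance}, i.e.\ when $\lambda$ equals a diagonal eigenvalue at a lower level $k<n$. Inspecting the two families shows a resonance never occurs for $\lambda=\lambda^-_{i,n}$, and for $\lambda=1-\tfrac n2$ it occurs only in the exponential case $i=2$, precisely when $1-\tfrac n2=\lambda^-_{2,n-4}$ (so $n\ge4$), or in the polynomial case only for exceptional $(p,q)$ (such as $p=q=2$). At such a level the singular block equals $\Mc_i-\mu^-_iI_2$, which annihilates the $\mu^-_i$–eigenvector and has one–dimensional range spanned by $\xi$; solvability then amounts to checking that the right–hand side carried by the recursion lies in that line — for $i=2$ immediate from the identity $\mathrm{diag}(1,\mu)\,\Mc_2\,\mathrm{diag}(1,\mu)=\mu\,\Mc_2$, and for the polynomial case by a similar direct computation. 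This compatibility check at the finitely many resonant levels is the only genuinely delicate point; everything else is the elementary triangular recursion above.
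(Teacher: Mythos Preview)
Your strategy is the right one and is essentially what the referenced papers \cite{GNZihp18,GNZjde18} do: exploit that $\Hc+\Mc_i$ preserves polynomial degree, read off the block--triangular structure with diagonal blocks $-\tfrac{k}{2}I_2+\Mc_i$, and build eigenfunctions by back--substitution starting from a top--degree term $P\,\xi$. Your computation of $\mathrm{spec}(\Mc_i)$ is correct (in particular the use of $\gamma^{p-1}\Gamma^{q-1}=\alpha\beta$ to get $\det\Mc_1=\alpha\beta(1-pq)$), and your identification of the resonant level $k=n-4$ for $i=2$ together with the verification of the compatibility there is accurate --- the identity $\mathrm{diag}(1,\mu)\,\Mc_2\,\mathrm{diag}(1,\mu)=\mu\Mc_2$ indeed forces the degree--$(n-4)$ source term to lie in the $\xi^+$--line, so the singular block can be inverted on it.

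Two remarks. First, you are working harder than necessary when $N\ge 2$: since you are free to choose $P$, take $P$ to be a nonzero \emph{harmonic} homogeneous polynomial of degree $n$ (these exist for every $n$ when $N\ge2$). Then $\Delta P=0$ and $(\Hc+\Mc_i)(P\xi)=(-\tfrac{n}{2}+\mu^\pm_i)P\xi$ on the nose, with no correction and hence no resonance issue at all; both components of $\xi$ are nonzero for each $\Mc_i$, so the degree is exactly $n$. This already proves the lemma for $N\ge 2$.

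Second, your one genuinely incomplete point is the resonance in the polynomial case $i=1$ for exceptional $(p,q)$ (and $N=1$, where the harmonic shortcut fails). There you write ``by a similar direct computation'', but the analogue of the $\Mc_2$--identity does \emph{not} hold for $\Mc_1$, and the compatibility condition is no longer a one--line consequence: the source carried down by the recursion involves products of $\text{diag}(1,\mu)$ with inverses of the nonresonant blocks $(\tfrac{n-k}{2}-1)I_2+\Mc_1$, and one must actually check that the resulting vector lands in the range of $\Mc_1-\mu^-_1 I_2$. This is the only place where your outline is not yet a proof; either carry out that check explicitly (it is a finite linear--algebra computation at each resonant level), or, as the referenced papers do, write down the eigenfunctions by an explicit recursion on the Hermite--type coefficients and verify directly that it closes.
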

\begin{proof} See Lemma 3.2  in \cite{GNZihp18} for the polynomial case \eqref{def:fg1} and Lemma 2.2 in \cite{GNZjde18} for the exponential case \eqref{def:fg2}. The reader is kindly invited to have a look at precise formulas of the eigenfunctions as well as a proper definition of the projection according to these eigenmodes in those papers. \end{proof}

\paragraph{Inner expansion:} From Lemma \ref{lemm:diagonal}, we know that $\binom{f_n}{g_n}_{n \geq 3}$ and $\binom{\tilde f_n}{\tilde g_n}_{n \geq 0}$ correspond to negative eigenvalues of $\Hc + \Mc_i$, therefore, we may consider the following formal expansion under the radially symmetric assumption of the solution, 
\begin{equation}\label{eq:decomPhiPsibar}
\binom{\bar \Phi}{\bar \Psi}(y,s) = a_0(s) \binom{f_0}{g_0}(y) + a_2(s)\binom{f_2}{g_2}, 
\end{equation}
where $|a_0(s)| + |a_2(s)| \to 0$ as $s \to +\infty$. Plugging this ansatz into \eqref{sys:barPhiPsi} and projecting onto $\binom{f_k}{g_k}, k = 0, 2$ yields the ordinary differential system 
\begin{equation}\label{sys:a02}
\arraycolsep=1.4pt\def\arraystretch{2}
\left\{\begin{array}{ll}
a_0' &= a_0 + \Oc(|a_0|^2 + |a_2|^2),\\
a_2' &= c_{*} a_2^2 + \Oc(|a_2|^3 + |a_0 a_2| + |a_0|^3),
\end{array}\right.
\end{equation}  
where 
$$c_* = \frac{2pq + p + q}{4pq(p+1)(q+1)(\mu + 1)} \;\; \textup{for \eqref{def:fg1}} \quad \textup{and}\quad  c_* = 2pq(\mu + 1) \;\; \textup{for \eqref{def:fg2}}.$$ 
Assume that $|a_0(s)| = o(|a_2(s)|)$ as $s \to +\infty$, we get
$$a_2(s) = -\frac{1}{c_* s} + \Oc\left(\frac{\log s}{s^2}\right) \quad \textup{and} \quad  |a_0(s)| = \Oc\left(\frac{1}{s^2}\right) \quad \textup{as} \;\; s \to +\infty.$$
From \eqref{eq:decomPhiPsibar}, \eqref{eq:linPhiPsicons} and the definition of the eigenfuntion $\binom{f_2}{g_2}$, we end up with the asymptotic behavior
\begin{eqnarray}
\left\{\arraycolsep=1.6pt\def\arraystretch{2}
\begin{array}{ll}
\Phi(y,s) &= \Gamma\left[1 - \frac{p+1}{c_*}\frac{|y|^2}{s}  - \frac{2p(1 - \mu)}{c_* s} \right] +  \Oc\left(\frac{\log s}{s^2}\right),\\
\Psi(y,s) &= \gamma\left[1  - \frac{q+1}{c_*}\frac{|y|^2}{s} - \frac{2q(\mu - 1)}{c_* s} \right] + \Oc\left(\frac{\log s}{s^2}\right),
\end{array} \right.  &\quad \textup{for \eqref{sys:PhiPsi1},} \label{asy:PhiPsi1}\\
\left\{\arraycolsep=1.6pt\def\arraystretch{2}
\begin{array}{l}
\Phi(y,s) =  \frac{1}{p}\left[ 1  - \frac{pq}{c_*}\frac{|y|^2}{s} + \frac{2\mu pq}{c_*s}\right] + \Oc\left(\frac{\log s}{s^2}\right),\\
\Psi(y,s) = \frac{1}{q} \left[ 1 - \frac{pq}{c_*}\frac{|y^2|}{s} + \frac{2pq}{c_*s} \right] + \Oc\left(\frac{\log s}{s^2}\right),
\end{array}\right. &\quad  \textup{for \eqref{sys:PhiPsi2},} \label{asy:PhiPsi2}
\end{eqnarray}
where the convergence takes place in $L_{\rho_1}^2(\Rb^N) \times L^2_{\rho_\mu}(\Rb^N)$ as well as uniformly on compact sets by standard parabolic regularity.\\
\paragraph{Outer expansion:} These above asymptotic expansions provide a relevant blowup variable
\begin{equation*}
z = \frac{y}{\sqrt{s}} = \frac{x}{\sqrt{(T-t)|\log (T-t)|}}.
\end{equation*}
We then try to search an approximate solution to \eqref{sys:PhiPsi1} (respectively \eqref{sys:PhiPsi2}) of the form 
\begin{equation}
\binom{\Phi}{\Psi}(y,s) = \binom{\Phi_0}{\Psi_0}(z) + \frac{1}{s}\binom{\Phi_1}{\Phi_1}(z) + \cdots,
\end{equation}
Plugging this anzats to \eqref{sys:PhiPsi1} (respectively \eqref{sys:PhiPsi2}) yields the leading order system 
\begin{eqnarray}
-\frac{z}{2}\Phi_0'  - \alpha\Phi_0  + \Phi_0^p = 0, \quad -\frac{z}{2}\Psi_0'  - \beta\Psi_0  + \Psi_0^q = 0,& \quad \textup{for \eqref{sys:PhiPsi1}}, \label{sys:ode1}\\
-\frac{z}{2}\Phi_0'  - \Phi_0  + q\Phi_0 \Psi_0 = 0, \quad -\frac{z}{2}\Psi_0'  - \Psi_0  + p\Phi_0\Psi_0 = 0,& \quad \textup{for \eqref{sys:PhiPsi2}}, \label{sys:ode2}
\end{eqnarray}
subject to the initial condition 
$$\big(\Phi_0, \Psi_0\big)(0) = \big(\Gamma, \gamma\big) \;\; \textup{for \eqref{asy:PhiPsi1}} \quad \textup{and} \quad \big(\Phi_0, \Psi_0\big)(0) = \big(1/p, 1/q\big) \;\; \textup{for \eqref{asy:PhiPsi2}}.$$
The solutions of these system are explicitly given by 
\begin{eqnarray}
\Phi_0(z) = \frac{\Gamma}{(1 + b|z|^2)^{\alpha}}, \quad \Psi_0(z) = \frac{\gamma}{(1 + b|z|^2)^{\beta}}& \quad \textup{for \eqref{sys:ode1}}, \label{sol:ode1}\\
\Phi_0(z) = \frac{1}{p(1 + b|z|^2)}, \quad \Psi_0(z) = \frac{1}{q(1 + b|z|^2)} & \quad \textup{for \eqref{sys:ode2}}, \label{sol:ode2}
\end{eqnarray}
where $b > 0$ is an integration constant. By matching the asymptotic expansions \eqref{sol:ode1} with \eqref{asy:PhiPsi1} and \eqref{sol:ode2} with \eqref{asy:PhiPsi2}, we obtain precisely the value of the constant $b$ as stated in Theorems \ref{theo:1} and \ref{theo:2} respectively. 

In conclusion, we have formally derived the following approximate blowup profile:
\begin{eqnarray}
\left\{\arraycolsep=1.6pt\def\arraystretch{2}
\begin{array}{ll}
\Phi(y,s) & \sim \varphi(y,s) =  \Phi_0\left(\frac{y}{\sqrt s}\right)  - \frac{2\Gamma p(1 - \mu)}{c_* s},\\
\Psi(y,s) & \sim \psi(y,s) = \Psi_0\left(\frac{y}{\sqrt s}\right) - \frac{2\gamma q(\mu - 1)}{c_* s},
\end{array} \right.  &\quad \textup{for \eqref{sys:PhiPsi1},} \label{def:pro1}\\
\left\{\arraycolsep=1.6pt\def\arraystretch{2}
\begin{array}{ll}
\Phi(y,s) = & \sim \varphi(y,s) =  \Phi_0\left(\frac{y}{\sqrt s}\right)  + \frac{2\mu q}{c_*s},\\
\Psi(y,s) = & \sim \psi(y,s) = \Psi_0\left(\frac{y}{\sqrt s}\right) + \frac{2p}{c_*s},
\end{array}\right. &\quad  \textup{for \eqref{sys:PhiPsi2}.} \label{def:pro2}
\end{eqnarray}

\section{The existence proof without technical details.}

We present all main arguments of the existence proof without technical details for which we kindly refer the interested reader to our papers \cite{GNZihp18, GNZjde18}. We first deal with the polynomial case \eqref{def:fg1}, i.e. the proof of Theorem \ref{theo:1}, then the exponential case \eqref{def:fg2}, i.e. the proof of Theorem \ref{theo:2}, which is more delicate due to the presence of the terms $\frac{|\nabla \Phi|^2}{\Phi}$ and $\frac{|\nabla \Psi|^2}{\Psi}$ in the similarity variables setting (see \eqref{sys:PhiPsi2}).  

\subsection{The polynomial case \eqref{def:fg1}.}
This subsection is devoted to the proof of part $(ii)$ of Theorem \ref{theo:1}. Parts $(i)$ and $(iii)$ are consequences of part $(ii)$. The reader can find all details of the proof in \cite{GNZihp18}. 
\paragraph{Formulation of the problem:} In view of the similarity variables \eqref{def:sim1}, we see that constructing blowup solutions for \eqref{sys:uv} coupled with \eqref{def:fg1} satisfying the asymptotic behavior \eqref{eq:asymuv1} is equivalent to constructing for \eqref{sys:PhiPsi1} a global in time solution $(\Phi, \Psi)$ such that 
\begin{equation}\label{eq:goal1}
\sup_{y \in \Rb^N}\Big( \big|\Phi(y,s) - \Phi^*(y/\sqrt{s})\big| + \big|\Psi(y,s) - \Psi^*(y/\sqrt{s})\big|\Big) \to 0 \quad \textup{as} \;\; s \to +\infty,
\end{equation}
where $\Phi^*$ and $\Psi^*$ are the profiles defined in Theorem \ref{theo:1}. From the formal computation of an approximate blowup profile presented in the previous section, we linearize \eqref{sys:PhiPsi1} around $(\varphi, \psi)$ defined in \eqref{def:pro1} instead of $(\Phi^*, \Psi^*)$, namely that we introduce
\begin{equation}
\binom{\Lambda}{\Upsilon} = \binom{\Phi}{\Psi} - \binom{\varphi}{\psi},
\end{equation} 
which leads the linearized system
\begin{equation}\label{eq:LamUp}
\partial_s \binom{\Lambda}{\Upsilon} = \Big(\Hc + \Mc_1 + V(y,s)\Big)\binom{\Lambda}{\Upsilon} + \binom{F_1(\Upsilon, y,s)}{F_2(\Lambda, y,s)} + \binom{R_1(y,s)}{R_2(y,s)},
\end{equation}
where $\Hc$ and $\Mc_1$ are defined in \eqref{def:Hc}, 
\begin{equation}\label{def:Vys}
V(y,s) = \begin{pmatrix} 
0 & p\big(\psi^{p-1} - \gamma^{p-1}\big)\\ q\big(\varphi^{q-1} - \Gamma^{q-1}\big) &0
\end{pmatrix} \equiv \begin{pmatrix}
0 & V_1\\ V_2 & 0
\end{pmatrix},
\end{equation}
\begin{equation}\label{def:Bys}
\binom{F_1(\Upsilon, y,s)}{F_2(\Lambda, y,s)} = \binom{|\Upsilon + \psi|^{p-1}(\Upsilon + \psi) - \psi^p - p\psi^{p-1}\Upsilon}{|\Lambda + \varphi|^{q-1}(\Lambda + \varphi) - \varphi^q - q\varphi^{q-1}\Lambda},
\end{equation}
and 
\begin{equation}\label{def:Rys}
\binom{R_1(y,s)}{R_2(y,s)} = \binom{-\partial_s \varphi + \Delta \varphi - \frac{1}{2}y\cdot \nabla \varphi - \left(\frac{p+1}{pq-1}\right)\varphi + \psi^p}{-\partial_s \psi + \mu\Delta \psi - \frac{1}{2}y\cdot \nabla \psi - \left(\frac{q+1}{pq-1}\right)\psi + \varphi^q}.
\end{equation}
Our aim turns to construct for system \eqref{eq:LamUp} a global in time solution $(\Lambda, \Upsilon)$ verifying 
\begin{equation}\label{eq:goalLU}
\sup_{y \in \Rb^N} \Big(\big|\Lambda(y,s)\big| + \big|\Upsilon(y,s)\big|\Big) \to 0 \quad \textup{as}\;\; s \to +\infty. 
\end{equation}
Since the solution $(\Lambda, \Upsilon)$ goes to zero as $s \to +\infty$ and the nonlinear term $(F_1, F_2)$ is built to be quadratic and the error term $(R_1, R_2)$ is of the size $s^{-1}$, we see that the dynamics of \eqref{eq:LamUp} are strongly influenced by the linear part $\Hc + \Mc_1 + V$. Here the potential $V$ behaves differently as follows:\\
 -  Outer region, i.e. $|y| \gtrsim \sqrt{s}$: for all $\epsilon > 0$, there exists $K_\epsilon > 0$ and $s_\epsilon > 0$ such that 
 $$\sup_{|y| \geq K_\epsilon \sqrt{s}, s \geq s_\epsilon}|V(y,s)| \leq \epsilon.$$
From Lemma \ref{lemm:diagonal}, we see that the linear operator $\Hc + \Mc_1 + V$ behaves as one with fully negative spectrum in the outer region, which makes analysis in this region simpler.\\
- Inner region, i.e. $|y| \lesssim \sqrt{s}$: the potential $V$ is considered as a perturbation of the linear part $\Hc + \Mc_1$. 

Since the behavior of $V$ in the inner and outer regions is different, this suggests to consider the dynamics of \eqref{eq:LamUp} for $|y| \lesssim \sqrt{s}$ and $|y| \gtrsim \sqrt{s}$ separately. To this end, we introduce the cut-off function 
\begin{equation}\label{def:chi}
\chi(y,s) = \chi_0\left(\frac{|y|}{K\sqrt{s}}\right), \quad \chi_0 \in \Cc_0^\infty(\Rb^+, [0,1]), \quad \chi_0(r) = \left\{ \begin{array}{ll} 1 & \textup{for}\;\; r \in [0,1], \\
0 & \textup{for}\;\; r \geq 2,
\end{array} \right. 
\end{equation}

where $K$ is a positive constant to be fixed large enough. We then define
\begin{equation}\label{def:LUe}
\binom{\Lambda_e}{\Upsilon_e} = \big(1 - \chi(y,s)\big)\binom{\Lambda}{\Upsilon}
\end{equation}
and consider the decomposition 
\begin{equation}\label{def:decom}
\binom{\Lambda}{\Upsilon}(y,s) = \sum_{n \leq M} \left[ \theta_n(s) \binom{f_n}{g_n} + \tilde{\theta}_n \binom{\tilde{f}_n}{\tilde{g}_n} \right] + \binom{\Lambda_-}{\Upsilon_-}(y,s),
\end{equation}
where $\theta_n = \Pi_{n}\binom{\Lambda}{\Upsilon}$ and $\tilde \theta_n = \tilde \Pi_n \binom{\Lambda}{\Upsilon}$ with $\Pi_n$ and $\tilde{\Pi}_n$ being the projections onto the modes $\binom{f_n}{g_n}$ and $\binom{\tilde f_n}{\tilde g_n}$ respectively, and $\binom{\Lambda_-}{\Upsilon_-} = \Pi_{-,M}\binom{\Lambda}{\Upsilon}$ is called the infinite-dimensional part with $\Pi_{-,M}$ being the projector on the eigen-subspace corresponding the spectrum of $\Hc$ lower than $\frac{1 - M}{2}$. Note that the decomposition \ref{def:decom} is unique. \\

\paragraph{Preparation of initial data and Definition of the shrinking set:} Given $A > 1$  and $s_0 \geq e$, we consider the initial data for system \eqref{eq:LamUp} of the form
\begin{equation}\label{def:idata}
\binom{\Lambda}{\Upsilon}_{A, s_0, d_0, d_1}(y) = \frac{A}{s_0^2}\left[d_0\binom{f_0}{g_0} + d_1 \cdot \binom{f_1}{g_1}\right] \chi(y, s_0),
\end{equation}  
where $d_0  \in \Rb$ and $d_1 \in \Rb^N$ are parameters of the problem. Our aim is to show that for a fixed large constant $A$, then $s_0 = s_0(A)$ is fixed large as well, there exist $(d_0, d_1) \in \Rb^{1 + N}$ so that system \eqref{eq:LamUp} with initial data at $s = s_0$ given by \eqref{def:idata} has the unique  solution $(\Lambda, \Upsilon)$ satisfies \eqref{eq:goalLU}. More precisely, we will show that the solution $(\Lambda, \Upsilon)$ belongs to the following shrinking set:
\begin{definition}[Definition of a shrinking set] \label{def:VA} For all $A \geq 1$ and $s \geq e$, we defined $\Vc_A(s)$ as the set of all $(\Lambda,\Upsilon) \in L^\infty(\Rb^N) \times L^\infty(\Rb^N)$ such that 
$$|\theta_0(s)| \leq \frac{A}{s^2}, \quad |\theta_1(s)| \leq \frac{A}{s^2}, \quad |\theta_2(s)| \leq \frac{A^4 \log s}{s^2},$$
$$|\theta_j(s)|\leq \frac{A^j}{s^\frac{j+1}{2}},\quad |\tilde{\theta}_j(s)| \leq \frac{A^j}{s^\frac{j+1}{2}} \;\; \text{for}\;\; 3\leq j\leq M, \quad |\tilde \theta_i(s)| \leq \frac{A^{2}}{s^2}\;\; \text{for}\;\; i = 0, 1,2,$$
$$ \left\|\frac{\Lambda_-(y,s)}{1 + |y|^{M+1}} \right\|_{L^\infty(\mathbb{R}^N)}\leq \frac{A^{M+1}}{s^{\frac{M+2}{2}}},\quad  \left\|\frac{\Upsilon_-(y,s)}{1 + |y|^{M+1}} \right\|_{L^\infty(\mathbb{R}^N)} \leq \frac{A^{M+1}}{s^{\frac{M+2}{2}}},$$
$$\|\Lambda_e(s)\|_{L^\infty(\Rb^N)} \leq \frac{A^{M+2}}{\sqrt{s}},\quad  \|\Upsilon_e(s)\|_{L^\infty(\Rb^N)} \leq \frac{A^{M+2}}{\sqrt{s}},$$
where $\Lambda_e, \Upsilon_e$ are defined by \eqref{def:LUe},  $\Lambda_-, \Upsilon_-$, $\theta_n$, $\tilde \theta_n$ are defined as in decomposition \eqref{def:decom}.
\end{definition}
\begin{remark} \label{rem:1} We can check that if $\binom{\Lambda}{\Upsilon} \in \Vc_A(s)$ for $s \geq e$, then
\begin{equation}\label{eq:LUinVAest}
\|\Lambda(s)\|_{L^\infty(\Rb)} + \|\Upsilon(s)\|_{L^\infty(\Rb)} \leq \frac{CA^{M+2}}{\sqrt{s}},
\end{equation}
for some positive constant $C$, hence, estimate \eqref{eq:goalLU} is proved.
\end{remark}
In the following we make sure that the initial data \eqref{def:idata} belongs to $\Vc_A(s_0)$.
\begin{proposition}[Properties of initial data \eqref{def:idata}] \label{prop:properinti} For each $A \gg 1$, there exist $s_0(A) \gg 1$  and a cuboid $\Dc_{s_0} \subset [-A, A]^{1 + N}$ such that for all $(d_0, d_1) \in \Dc_{s_0}$, the following properties hold:
\begin{itemize}
\item[(i)] The initial data \eqref{def:idata} belongs to $\Vc_A(s_0)$ with strict inequalities except for the estimates of $\theta_{0}(s_0)$ and $\theta_1(s_0)$.
\item[(ii)] The map $\Theta: \Dc_{s_0} \to \Rb^{1 + N}$, defined as $\Theta(d_0, d_1) = (\theta_0(s_0), \theta_1(s_0))$, is linear, one to one from $\Dc_{s_0}$ to $[-As_0^{-2}, As_0^{-2}]^{1 + N}$, and maps $\partial \Dc_{s_0}$ into $\partial \big([-As_0^{-2}, As_0^{-2}]^{1 + N}\big)$. Moreover, the degree of $\Theta$ on the boundary is different from zero.
\end{itemize}
\end{proposition}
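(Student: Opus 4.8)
The plan is to exploit the deliberately simple structure of the initial data \eqref{def:idata}: up to the cut-off $\chi(\cdot,s_0)$ it lies entirely in the two-dimensional space spanned by the eigenmodes $\binom{f_0}{g_0}$ and $\binom{f_1}{g_1}$ of $\Hc+\Mc_1$ attached to the positive eigenvalues $1$ and $\frac{1}{2}$ provided by Lemma \ref{lemm:diagonal}, and the piece removed by $1-\chi$ is exponentially small in $s_0$. I would therefore write
\[
\binom{\Lambda}{\Upsilon}_{A,s_0,d_0,d_1}=\frac{A}{s_0^2}\Big[d_0\binom{f_0}{g_0}+d_1\cdot\binom{f_1}{g_1}\Big]-\frac{A}{s_0^2}\big(1-\chi(y,s_0)\big)\Big[d_0\binom{f_0}{g_0}+d_1\cdot\binom{f_1}{g_1}\Big],
\]
where, with the normalisation of the eigenfunctions and the orthogonality properties underlying the decomposition \eqref{def:decom}, the first term contributes exactly $(\theta_0,\theta_1)=(d_0,d_1)$ and zero to every other coordinate in Definition \ref{def:VA} (in particular its $\Pi_{-,M}$-part vanishes, since modes $0$ and $1$ belong to the finite part $n\le M$), while the second term is supported in $\{|y|\ge K\sqrt{s_0}\}$.

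Next I would bound the contribution of this truncation term to each coordinate of $\Vc_A(s_0)$. On $\{|y|\ge K\sqrt{s_0}\}$ one has $\rho_1(y)+\rho_\mu(y)\le Ce^{-cK^2 s_0}$ while $|f_n(y)|+|g_n(y)|\le C(1+|y|)$ for $n=0,1$, whence: (a) every projection $\Pi_n,\tilde\Pi_n$ of the truncation term has size $\Oc(As_0^{-2}e^{-cs_0})$, far below the polynomial-in-$s_0^{-1}$ thresholds imposed on $\theta_n$ ($n\ge 2$) and on all $\tilde\theta_n$; (b) since $1+|y|^{M+1}\gtrsim s_0^{(M+1)/2}$ on the support of $1-\chi$, the weighted $L^\infty$ norm of the truncation is $\le CAK^{-M}s_0^{-M/2}$, so the weighted $L^\infty$ boundedness of $\Pi_{-,M}$ gives $\big\|\Lambda_-/(1+|y|^{M+1})\big\|_{L^\infty}+\big\|\Upsilon_-/(1+|y|^{M+1})\big\|_{L^\infty}\le CA^2K^{-M}s_0^{-M/2-2}\ll A^{M+1}s_0^{-(M+2)/2}$; and (c) $\binom{\Lambda_e}{\Upsilon_e}=(1-\chi)\chi\,\frac{A}{s_0^2}\big[d_0\binom{f_0}{g_0}+d_1\cdot\binom{f_1}{g_1}\big]$ is supported in the annulus $K\sqrt{s_0}\le|y|\le 2K\sqrt{s_0}$, where $|f_n|,|g_n|\le C\sqrt{s_0}$, so $\|\Lambda_e(s_0)\|_{L^\infty}+\|\Upsilon_e(s_0)\|_{L^\infty}\le CAs_0^{-3/2}\ll A^{M+2}s_0^{-1/2}$. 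With $\Dc_{s_0}$ chosen (below) inside $[-2,2]^{1+N}$, so that $|d_0|+|d_1|\le C$, taking $s_0=s_0(A)$ large makes each of these strict by a wide margin; this establishes part (i) for every coordinate except $\theta_0(s_0),\theta_1(s_0)$.

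It remains to treat $(\theta_0,\theta_1)$ and to construct $\Dc_{s_0}$. By the decomposition above, $\Theta(d_0,d_1):=\big(\theta_0(s_0),\theta_1(s_0)\big)=\frac{A}{s_0^2}\big(\mathrm{Id}+E_{s_0}\big)(d_0,d_1)$, where $E_{s_0}$ is linear with $\|E_{s_0}\|\le Ce^{-cs_0}$, and $\Theta$ is genuinely linear because \eqref{def:idata} depends linearly on $(d_0,d_1)$. For $s_0$ large $\mathrm{Id}+E_{s_0}$ is invertible with positive determinant, so I set $\Dc_{s_0}:=\Theta^{-1}\big([-As_0^{-2},As_0^{-2}]^{1+N}\big)=(\mathrm{Id}+E_{s_0})^{-1}\big([-1,1]^{1+N}\big)$, a parallelepiped that is an $\Oc(e^{-cs_0})$-perturbation of the unit cube and hence is contained in $[-A,A]^{1+N}$. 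By construction $\Theta$ is a linear bijection of $\Dc_{s_0}$ onto $[-As_0^{-2},As_0^{-2}]^{1+N}$ which carries $\partial\Dc_{s_0}$ onto $\partial\big([-As_0^{-2},As_0^{-2}]^{1+N}\big)$; being a linear isomorphism of positive determinant, the induced map of boundary spheres has Brouwer degree $+1\neq 0$. Finally, on $\partial\Dc_{s_0}$ the point $(\theta_0,\theta_1)$ lies on $\partial\big([-As_0^{-2},As_0^{-2}]^{1+N}\big)$, so at least one of $|\theta_0(s_0)|\le As_0^{-2}$, $|\theta_1(s_0)|\le As_0^{-2}$ holds with equality while all the remaining constraints stay strict by the previous paragraph; this gives part (i) with strict inequalities except for $\theta_0,\theta_1$, and part (ii).

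The argument carries no genuine difficulty; the two points that must be handled carefully are: fixing once and for all the normalisation of the eigenfunctions $f_n,g_n$ and of the projections $\Pi_n,\tilde\Pi_n,\Pi_{-,M}$ so that the main part of \eqref{def:idata} really projects to $(d_0,d_1)$ on the first two modes and to $0$ elsewhere --- precisely the content of the construction in \cite{GNZihp18} behind Lemma \ref{lemm:diagonal} and the decomposition \eqref{def:decom} --- and the weighted $L^\infty$ boundedness of $\Pi_{-,M}$ invoked in step (b). Granting these, the proposition follows from the elementary estimates sketched above.
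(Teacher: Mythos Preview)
Your proposal is correct and follows the standard approach that the paper defers to \cite{GNZihp18} (the paper itself gives no proof here, only the reference). One small slip: in your first paragraph you write that the main part ``contributes exactly $(\theta_0,\theta_1)=(d_0,d_1)$'', whereas it contributes $\frac{A}{s_0^2}(d_0,d_1)$; you silently correct this in the third paragraph when writing $\Theta(d_0,d_1)=\frac{A}{s_0^2}(\mathrm{Id}+E_{s_0})(d_0,d_1)$, so the argument is unaffected. Note also that by parity (the weight $\rho_\eta$ is even, $f_0,g_0$ are even and $f_1,g_1$ are odd) the error $E_{s_0}$ is block-diagonal in $(d_0)\times(d_1)$, so $\Dc_{s_0}$ is indeed a genuine cuboid as stated, not merely a parallelepiped.
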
 
\begin{proof} See Proposition 3.3 in \cite{GNZihp18}.
\end{proof}

\paragraph{Existence of a solution to \eqref{eq:LamUp} trapped in $\Vc_A(s)$:} From Remark \ref{rem:1}, we aim at proving the following.

\begin{proposition}[Existence of a solution of \eqref{eq:LamUp} trapped in $\Vc_A(s)$] \label{prop:goalVA} There exists $A_1$ such that for all $A \geq A_1$, there exists $s_{0,1}(A)$ such that for all $s_0 \geq s_{0,1}$, there exists $(d_0,d_1)$ such that if $\binom{\Lambda}{\Upsilon}$ is the solution of \eqref{eq:LamUp} with initial data at $s_0$ given by \eqref{def:idata}, then $\binom{\Lambda(s)}{\Upsilon(s)} \in \Vc_A(s)$ for all $s \geq s_0$.
\end{proposition}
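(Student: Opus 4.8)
The plan is to implement the classical topological shooting argument à la Merle--Zaag. Fix $A$ large and $s_0$ large depending on $A$; by Proposition \ref{prop:properinti} the initial data \eqref{def:idata} lie in $\Vc_A(s_0)$ for every choice of parameters $(d_0,d_1)$ in the cuboid $\Dc_{s_0}$, with the only non-strict inequalities being those controlling the two expanding modes $\theta_0(s_0)$ and $\theta_1(s_0)$. For each $(d_0,d_1)\in\Dc_{s_0}$ let $(\Lambda,\Upsilon)$ be the corresponding solution of \eqref{eq:LamUp}, which exists and is smooth as long as it stays in (a neighborhood of) $\Vc_A(s)$. Define the exit time $s_*(d_0,d_1)=\sup\{s\ge s_0:\ (\Lambda(\sigma),\Upsilon(\sigma))\in\Vc_A(\sigma)\ \forall\sigma\in[s_0,s]\}$. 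The goal is to show that for a suitable $(d_0,d_1)$ one has $s_*=+\infty$; once this is known, Remark \ref{rem:1} gives \eqref{eq:goalLU}, and hence (via the similarity-variables dictionary \eqref{def:sim1}) parts $(i)$--$(iii)$ of Theorem \ref{theo:1}.

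The heart of the argument is the following reduction: if $(d_0,d_1)\in\Dc_{s_0}$ is such that $s_*<+\infty$, then at $s=s_*$ the solution touches the boundary of $\Vc_A(s_*)$ \emph{only through the two modes $\theta_0$ and $\theta_1$}, i.e.\ $(s_*^2\theta_0(s_*),s_*^2\theta_1(s_*))\in\partial([-A,A]^{1+N})$, and moreover the flow is strictly outgoing there (a transversality/blow-up-on-the-boundary property). To prove this one derives, from \eqref{eq:LamUp}, differential inequalities for every component of the decomposition \eqref{def:decom}:
\begin{itemize}
\item For the negative modes $\theta_j,\tilde\theta_j$ ($j\ge 3$), the infinite-dimensional remainder $(\Lambda_-,\Upsilon_-)$, and the outer piece $(\Lambda_e,\Upsilon_e)$, the linear part $\Hc+\Mc_1+V$ is (effectively) dissipative by Lemma \ref{lemm:diagonal}; combined with the quadratic smallness of $(F_1,F_2)$ on $\Vc_A$ and the $\Oc(s^{-1})$ bound on the error $(R_1,R_2)$ coming from \eqref{def:pro1}, these bounds are improved strictly, so these components cannot be the ones that reach the boundary (here the largeness of $A$ and of $K$ in \eqref{def:chi} is used to beat the constants).
\item For the neutral mode $\theta_2$, the ODE is $\theta_2'=\Oc(s^{-3}\log s)\cdot\text{(bounds)}$ type once the profile correction is subtracted, and one shows $|\theta_2(s)|\le \tfrac12 A^4 s^{-2}\log s$ strictly inside $[s_0,s_*)$.
\item For the expanding modes, $\theta_0'=\theta_0+\Oc(\cdot)$ and $\theta_1'=\tfrac12\theta_1+\Oc(\cdot)$, so if $|s^2\theta_i|$ ever reaches $A$ it must exit, with strictly outgoing velocity.
\end{itemize}
The upshot is that $\phi:(d_0,d_1)\mapsto s_*^2\big(\theta_0(s_*),\theta_1(s_*)\big)$ would be a well-defined continuous map $\Dc_{s_0}\to\partial([-A,A]^{1+N})$, and by Proposition \ref{prop:properinti}(i) it restricts to the identity (up to the nonzero-degree linear map $\Theta$) on $\partial\Dc_{s_0}$. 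Hence $\phi$ is a continuous retraction of $\Dc_{s_0}$ onto a subset of its boundary whose induced map on the boundary has nonzero degree — impossible by Brouwer's theorem / basic degree theory. This contradiction forces $s_*=+\infty$ for some $(d_0,d_1)\in\Dc_{s_0}$, which is the claim.

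The main obstacle is the derivation of the component-wise differential inequalities, in particular handling the potential $V(y,s)$ of \eqref{def:Vys}: it is not small in the inner region, so one cannot treat $\Hc+\Mc_1+V$ perturbatively everywhere. The resolution is exactly the inner/outer split built into $\Vc_A(s)$ — in the outer region $|y|\gtrsim K\sqrt s$ one exploits $|V|\le\epsilon$ and the fully negative spectrum to close the estimate on $(\Lambda_e,\Upsilon_e)$ in $L^\infty$, while in the inner region $V$ is absorbed into the projections $\theta_j,\tilde\theta_j$ at the cost of $\Oc(s^{-1/2})$-type losses that are still compatible with the $\Vc_A$ bounds. Carrying the cross terms between the $\chi$-cutoff and the operator $\Hc$ (commutator terms of size $s^{-1}$ times lower-order norms) through all the projections, and making sure the hierarchy of powers of $A$ in Definition \ref{def:VA} is consistent (each bound improved using strictly smaller powers of $A$ than it contains), is where essentially all the technical work lies; we refer to \cite{GNZihp18} for the full details.
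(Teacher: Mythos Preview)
Your proposal is correct and follows essentially the same approach as the paper: argue by contradiction assuming $s_*(d_0,d_1)<+\infty$ for all parameters, invoke the reduction-to-finite-dimensions Proposition \ref{prop:redu} to show exit can occur only through $(\theta_0,\theta_1)$ with outgoing transversality, then obtain a continuous map $\Dc_{s_0}\to\partial([-A,A]^{1+N})$ of nonzero boundary degree, contradicting index theory. The only small imprecision is that in your first bullet the modes $\tilde\theta_j$ for $j=0,1,2$ also correspond to strictly negative eigenvalues and must be (and are, in Definition \ref{def:VA}) controlled among the dissipative components.
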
 
\begin{proof} For a fixed constant $A \gg 1$ and $s_0(A) \gg 1$, we note from the local Cauchy problem for system \eqref{sys:uv}-\eqref{def:fg1} in $L^\infty(\Rb^N) \times L^\infty(\Rb^N)$  that for each initial data \eqref{def:idata}, system \eqref{eq:LamUp} has a unique solution which stays in $\Vc_{A}(s)$ until some maximum time $s_* = s_*(d_0,d_1)$. If $ s_*(d_0,d_1)= +\infty$ for some $(d_0,d_1) \in \Dc_{s_0}$, then the proof is complete. Otherwise, we argue by contradiction and suppose that $s_*(d_0,d_1) < +\infty$ for any $(d_0,d_1) \in \Dc_{s_0}$. By continuity and the definition of $s_*$, we note that the solution at time $s_*$ is on the boundary of $\Vc_{A}(s_*)$. Thus, at least one of the inequalities in the definition of $\Vc_A(s_*)$ is an equality. In the following proposition, we show that this can happen only for the two components $\theta_0(s_*)$ and $\theta_1(s_*)$.

\begin{proposition}[Reduction to a finite dimensional problem] \label{prop:redu} Assume that $\binom{\Lambda}{\Upsilon}$ is a solution of  \eqref{eq:LamUp} with initial data at $s = s_0$ given by \eqref{def:idata} with $(d_0,d_1) \in \Dc_{s_0}$, and $\binom{\Lambda(s)}{\Upsilon(s)} \in \Vc_{A}(s)$ for all $s \in [s_0, s_1]$ for some $s_1 \geq s_0$ and $\binom{\Lambda(s_1)}{\Upsilon(s_1)} \in \partial \Vc_A(s_1)$, then 
\begin{itemize}
\item[(i)] $\big(\theta_0(s_1), \theta_1(s_1)\big) \in \partial \left( \left[-\frac{A}{s_1^2}, \frac{A}{s_1^2}\right]\right)^{1 + N}$.
\item[(ii)] There exists $\nu_0 > 0$ such that 
$$\forall \nu \in (0, \nu_0), \quad \binom{\Lambda(s_1 + \nu)}{\Upsilon(s_1 + \nu)} \not \in \Vc_A(s_1 + \nu). $$
\end{itemize}
\end{proposition}
\begin{proof} The proof of Proposition \ref{prop:redu} is a direct consequence of the dynamics of system \eqref{eq:LamUp}. The idea is to project system \eqref{eq:LamUp} on the different components of the decomposition \eqref{def:decom} and \eqref{def:LUe}. For all details of the proof, see Section 5.2 in \cite{GNZihp18}.
\end{proof}

From part $(i)$ of Proposition \ref{prop:redu}, we see that 
$$\big(\theta_0(s_*), \theta_1(s_*)\big) \in \partial \left( \left[-\frac{A}{s_*^2}, \frac{A}{s_*^2}\right]\right)^{1 + N}.$$
Hence, we may define the rescaled flow $\Theta$ at $s = s_*$ as follows:
\begin{align*}
\Theta: \Dc_{s_0} &\to \partial \big([-1,1]^{1 + N}\big)\\
(d_0,d_1) & \mapsto \frac{s_*^2}{A}\big(\theta_0, \theta_1\big)_{d_0,d_1}(s_*),
\end{align*}
which is continuous from part $(ii)$ of Proposition \ref{prop:redu}. On the other hand, from Proposition \ref{prop:properinti}, we have the strict inequalities for the other components for $(d_0,d_1) \in \partial \Dc_{s_0}$. Applying part $(ii)$ of Proposition \ref{prop:redu}, we see that $\binom{\Lambda(s)}{\Upsilon(s)}$ must leave $\Vc_A(s)$ at $s = s_0$, hence, $s_*(d_0,d_1) = s_0$. Recalling from part $(ii)$ of Proposition \ref{prop:properinti} that the degree of $\Theta$ on the boundary is different from zero. A contradiction then follows from the index theory. This concludes that there must exist $(d_0,d_1) \in \Dc_{s_0}$ such that for all $s \geq s_0$, $\binom{\Lambda(s)}{\Upsilon(s)} \in \Vc_A(s)$. This concludes the proof of Proposition \ref{prop:goalVA} as well as part $(ii)$ of Theorem \ref{theo:1}.
\end{proof}

\paragraph{Equivalence of the final blowup profile:} We present the main argument for the proof  of part $(iii)$ of Theorem \ref{theo:1}. For each $x_0 \neq 0$ with $|x_0| \ll 1$, we introduce for all $(\xi, \tau) \in \Rb \times \left[-\frac{t_0(x_0)}{T - t_0(x_0)},1 \right)$ the auxillary functions  
$$g(x_0, \xi, \tau) = (T-t_0(x_0))^\alpha u(x,t), \quad h(x_0, \xi, \tau) = (T-t_0(x_0))^\beta v(x,t),$$
where 
\begin{equation}\label{eq:defx0t_0}
x = x_0 + \xi\sqrt{T - t_0(x_0)}, \quad t = t_0(x_0) + \tau(T - t_0(x_0)),
\end{equation}\label{eq:deft0x0unique}
and $t_0(x_0)$ is uniquely determined by 
 \begin{equation}\label{eq:relx0at0}
|x_0| = K\sqrt{(T - t_0(x_0))|\log (T - t_0(x_0))|} \quad \textup{for a fixed constant $K \gg 1$.}
 \end{equation}
From the invariance of system \eqref{sys:uv}-\eqref{def:fg1} under the scaling, $(g(x_0, \xi, \tau), h(x_0, \xi, \tau))$ also satisfies \eqref{sys:uv}-\eqref{def:fg1}. From \eqref{eq:deft0x0unique}, \eqref{eq:defx0t_0} and the asymptotic behavior \eqref{eq:asymuv1}, we have
$$\sup_{|\xi| \leq 2 |\log (T-t_0(x_0))|^{1/4}} \left|g(x_0, \xi, 0) - \Phi^*(K)\right| \leq \frac{C}{|\log (T - t_0(x_0))|^{1/4}} \to 0,$$
and 
$$\sup_{|\xi| \leq 2 |\log (T-t_0(x_0))|^{1/4}} \left|h(x_0, \xi, 0) - \Psi^*(K)\right| \leq \frac{C}{|\log (T - t_0(x_0))|^{1/4}} \to 0,$$
as $|x_0| \to 0$. From the continuity with respect to initial data for system \eqref{sys:uv}-\eqref{def:fg1} associated to a space-localization in the ball $B(0, |\xi| < |\log (T-t_0(x_0))|^{1/4})$, we can show that 
$$\sup_{|\xi| \leq 2 |\log (T-t_0(x_0))|^{1/4}, 0 \leq \tau < 1} \left|g(x_0, \xi, 0) - \hat{g}_{K}(\tau)\right| \leq \epsilon(x_0) \to 0,$$
and 
$$\sup_{|\xi| \leq 2 |\log (T-t_0(x_0))|^{1/4}, 0 \leq \tau < 1} \left|h(x_0, \xi, 0) - \hat{h}_{K}(\tau)\right| \leq \epsilon(x_0) \to 0,$$
as $x_0 \to 0$, where 
$$\hat g_{K}(\tau) = \Gamma(1 - \tau + bK^2)^{-\alpha}, \quad \hat h_{K}(\tau) = \gamma(1 - \tau + bK^2)^{-\beta},$$
is the solution of system \eqref{sys:uv}-\eqref{def:fg1} with constant initial data $(\Phi^*(K), \Psi^*(K))$.

Making $\tau \to 1$ and using \eqref{eq:defx0t_0} yields
\begin{align*}
u^*(x_0) &= \lim_{t \to T}u(x,t) = (T-t_0(x_0))^{-\alpha}\lim_{\tau \to 1}g(x_0, 0, \tau) \sim (T-t_0(x_0))^{-\alpha}\hat g_{K}(1),\\
v^*(x_0) &= \lim_{t \to T}v(x,t) = (T-t_0(x_0))^{-\beta}\lim_{\tau \to 1}h(x_0, 0, \tau)\sim (T-t_0(x_0))^{-\beta}\hat h_{K}(1),
\end{align*}
as $|x_0| \to 0$. Using the relation \eqref{eq:relx0at0}, we obtain
$$|\log(T - t_0(x_0))| \sim 2\log |x_0|, \quad T -t_0(x_0) \sim \frac{|x_0|^2}{2K^2 |\log |x_0||} \quad \text{as}\quad |x_0| \to 0,$$
hence,
$$u^*(x_0) \sim \Gamma \left(\frac{b|x_0|^2}{2|\log |x_0||} \right)^{-\alpha}, \quad v^*(x_0) \sim \gamma \left(\frac{b|x_0|^2}{2|\log |x_0||} \right)^{-\beta},$$
as $|x_0| \to 0$. This concludes the proof of part $(iii)$ of Theorem \ref{theo:1}. Note that part $(iii)$ directly gives the single point blowup which is the conclusion of part $(i)$. This completes the proof of Theorem \ref{theo:1}. For the proof of Theorem \ref{theo:2}, we refer to \cite{GNZihp18}. 

\subsection{The exponential case \eqref{def:fg2}.} In this section we shall sketch those variants of the previous arguments which are required for the proof of Theorem \ref{theo:2}. All details of the proof can be found in \cite{GNZjde18}. The main difference between the two cases is the presence of the nonlinear gradient terms $\frac{|\nabla \Phi|^2}{\Phi}$ and $\frac{|\nabla \Psi|^2}{\Psi}$ in \eqref{sys:PhiPsi2} after making the change of variables \eqref{def:sim2}. In view of the approximate profile \eqref{def:pro2}, the control of these terms is delicate, in particular when the solution goes to zero in the intermediate zone. In order to treat them, we introduce a very careful control of the solution in a 3-fold shrinking set defined as follows: For $K_0 > 0$, $\epsilon_0 > 0$ and $t \in [0, T)$, we set 
\begin{align*}
\Dc_1(t) &= \left\{x\;  \Big\vert \; |x| \leq K_0 \sqrt{|\ln(T-t)|(T-t)} \right\}\\
&\quad \equiv \left\{x \;\big\vert \; |y| \leq K_0 \sqrt{s}\right\} \equiv \left\{ x \; \Big\vert \; |z| \leq K_0\right\},\\
\Dc_2(t) &= \left\{x \; \Big\vert \; \frac{K_0}{4} \sqrt{|\ln(T-t)|(T-t)} \leq |x| \leq \epsilon_0 \right\}\\
&\quad  \equiv \left\{x \; \Big\vert\; \frac{K_0}{4} \sqrt{s} \leq |y| \leq \epsilon_0e^{\frac{s}{2}}\right\} \equiv \left\{ x \; \Big \vert\; \frac{K_0}{4} \leq |z| \leq  \frac{\epsilon_0}{\sqrt s }e^{\frac s2} \right\},\\
\Dc_3(t) &= \left\{x\;  \Big\vert \; |x| \geq \frac{\epsilon_0}{4} \right\} \equiv \left\{x \;\big\vert \; |y| \geq \frac{\epsilon_0}{4}e^{\frac{s}{2}}\right\} \equiv \left\{ x \; \Big\vert \; |z| \geq \frac{\epsilon_0}{4 \sqrt s}e^{\frac{s}{2}} \right\}.
\end{align*}

- In the \textit{blowup region} $\Dc_1$, we linearize \eqref{asy:PhiPsi2} around the approximate profile \eqref{def:pro2}, namely that  $(\Lambda, \Upsilon) = (\Phi, \Psi) - (\varphi, \psi)$ solves the system 
\begin{equation}\label{eq:LamUp2}
\partial_s \binom{\Lambda}{\Upsilon} = \Big(\Hc + \Mc_2 + V(y,s)\Big)\binom{\Lambda}{\Upsilon} + \binom{q}{p}\Lambda \Upsilon + \binom{R_1}{R_2} + \binom{G_1}{G_2},
\end{equation}
where $\Hc$ and $\Mc_2$ are defined by \eqref{def:Hc}, 
\begin{equation}\label{def:Vys2}
V(y,s) = \begin{pmatrix} 
q\psi - 1 & \quad q\big(\phi - 1/p\big)\\ p\big(\psi - 1/q\big) & \quad p\phi - 1
\end{pmatrix} = \begin{pmatrix} V_1 &V_2 \\V_3 &V_4
\end{pmatrix},
\end{equation}
\begin{equation}\label{def:Gys2}
\binom{G_1}{G_2} = \binom{-|\nabla (\Lambda + \phi)|^2 (\Lambda + \phi)^{-1} + |\nabla \phi|^2\phi^{-1}}{-\mu|\nabla(\Upsilon + \psi)|^2(\Upsilon + \psi)^{-1} + \mu |\nabla \psi|^2 \psi^{-1}},
\end{equation}
and 
\begin{equation}\label{def:Rys2}
\binom{R_1}{R_2} = \binom{-\partial_s \phi + \Delta \phi - \frac{1}{2}y\cdot \nabla \phi - \phi + q\phi\psi - |\nabla \phi|^2\phi^{-1}}{-\partial_s \psi + \mu\Delta \psi - \frac{1}{2}y\cdot \nabla \psi - \psi + p\phi\psi - \mu |\nabla \psi|^2\psi^{-1}}.
\end{equation}
The analysis is similar as for the polynomial case according to the decomposition \eqref{def:decom} and  the definition \eqref{def:LUe}.\\

- In the intermediate region $\Dc_2$, we control $(u,v)$ by introducing the following auxillary functions $(\tilde u, \tilde v)$ defined for $x \ne 0$, 
\begin{equation}\label{def:uvtilde}
\arraycolsep=1.6pt\def\arraystretch{2}
\left\{\begin{array}{l} 
\tilde{u}(x, \xi, \tau) = \frac{1}{q}\ln \sigma(x) + u\Big(x + \xi\sqrt{\sigma(x)}, t(x) + \tau \sigma(x)\Big),\\
\tilde{v}(x, \xi, \tau) = \frac{1}{p}\ln \sigma(x) + v\Big(x + \xi\sqrt{\sigma(x)}, t(x) + \tau \sigma(x)\Big),
\end{array} \right.
\end{equation}
where  $t(x)$ is uniquely defined for $|x|$ sufficiently small by 
\begin{equation}\label{def:tx}
|x| = \frac{K_0}{4}\sqrt{\sigma(x)|\ln \sigma(x)|} \quad \text{with} \quad \sigma(x) = T - t(x).
\end{equation}
By the scaling invariance of the problem, we see that $(\tilde{u}, \tilde{v})$ also satisfies  system \eqref{sys:uv}-\eqref{def:fg2}. We prove that $(\tilde u,\tilde v)$ behaves for 
$$|\xi| \leq \alpha_0\sqrt{|\ln \sigma(x)|}\quad \text{and} \quad \tau \in \left[\frac{t_0  - t(x)}{\sigma(x)},1\right)$$
for some $t_0 < T$ and $\alpha_0 > 0$, like the solution of the ordinary differential system 
\begin{equation}
\partial_\tau \hat u = e^{p\hat v}, \quad \partial_\tau \hat v = e^{q\hat u},
\end{equation}
subject to the initial data 
$$\hat u(0) = -\frac{1}{q}\ln \left[p\left(1 + \frac{K_0^2/16}{2(\mu + 1)}\right)\right], \quad \hat v(0) = -\frac{1}{p}\ln \left[q\left(1 + \frac{K_0^2/16}{2(\mu + 1)}\right)\right].$$
The explicit solution is given by 
\begin{equation}\label{def:solUc}
\hat u(\tau) = -\frac{1}{q}\ln \left[p\left(1 - \tau + \frac{K_0^2/16}{2(\mu + 1)}\right)\right], \quad \hat v(\tau) =-\frac{1}{p}\ln \left[q\left(1 - \tau + \frac{K_0^2/16}{2(\mu + 1)}\right)\right]. 
\end{equation}
The analysis in $\Dc_2$ directly yields the conclusion of part $(iii)$ of Theorem \ref{theo:2}. \\

- In $\Dc_3$, we directly control $(u,v)$ by using the local in time well-posedness of the Cauchy problem for system \eqref{sys:uv}.\\

The following definition of the shrinking set to trap the solution is the crucial difference in comparison with the existence proof for the polynomial case.
\begin{definition}[Definition of a shrinking set] \label{def:St} For all $t_0 < T$, $K_0 > 0$, $\epsilon_0 > 0$, $\alpha_0 > 0$, $A > 0$, $\delta_0 > 0$, $\eta_0 > 0$, $C_0 > 0$, for all $t \in [t_0,T)$, we define  $\;\Sc(t_0, K_0, \epsilon_0, \alpha_0, A, \delta_0, \eta_0, C_0,  t)$ being the set of all functions $(u,v)$ such that
\begin{itemize}
\item[(i)] \textit{(Control in $\Dc_1$)}  $\quad \binom{\Lambda(s)}{\Upsilon(s)} \in \Vc_A(s)$, where $\Vc_A(s)$ is introduced in Definition \ref{def:VA}.
\item[(ii)] \textit{(Control in $\Dc_2$)} For all $|x| \in \left[\frac{K_0}{4}\sqrt{|\ln(T-t)|(T-t)}, \epsilon_0\right]$, $\tau = \tau(x,t) = \frac{t - t(x)}{\sigma(x)}$ and $|\xi| \leq \alpha_0 \sqrt{\ln \sigma(x)}$, 
\begin{align*}
\left|\tilde u(x,\xi, \tau) - \hat u(\tau)\right| \leq \delta_0, \quad |\nabla_\xi \tilde u(x, \xi, \tau)| \leq \frac{C_0}{\sqrt{|\ln \sigma(x)|}}, \\
\left|\tilde v(x,\xi, \tau) - \hat v(\tau)\right| \leq \delta_0, \quad |\nabla_\xi \tilde v(x, \xi, \tau)| \leq \frac{C_0}{\sqrt{|\ln \sigma(x)|}}, 
\end{align*}
where $\tilde u, \tilde{v}$, $\hat u$, $\hat v$, $t(x)$ and $\sigma(x)$ are defined in \eqref{def:uvtilde}, \eqref{def:solUc} and \eqref{def:tx} respectively.
\item[(iii)] \textit{(Control in $\Dc_3$)} For all $|x| \geq \frac{\epsilon_0}{4}$, 
\begin{align*}
|\nabla_x ^i u(x,t) - \nabla_x ^i u(x, t_0)| \leq \eta_0 \quad \text{and} \quad |\nabla_x ^i v(x,t) - \nabla_x ^i v(x, t_0)| \leq \eta_0 \quad \text{for} \;\; i = 0,1.
\end{align*}
\end{itemize}
\end{definition}
\begin{remark} In comparison with Definition \ref{def:VA}, the shrinking set $\Sc$ has additional estimates in the domains $\Dc_2$ and $\Dc_3$. These estimates are crucially needed to achieve the control of the nonlinear gradient term $\binom{G_1}{G_2}$ appearing in \eqref{eq:LamUp2}. 
\end{remark}

After defining the shrinking set $\Sc$ to trap the solution, we need a suitable initial data for \eqref{eq:LamUp2} so that the corresponding solution gradually belongs to $\Sc(t)$ for all $t \in [t_0, T)$. To this end, we consider the following functions depending on $(N+1)$ parameters $(d_0, d_1) \in \Rb^{1 + N}$: 
\begin{align}
\binom{qu}{pv}_{d_0,d_1}(x,t_0) &= \binom{\hat u_*(x)}{\hat v_*(x)}\Big(1 - \chi_1(x,t_0)\Big) + \left\{\binom{1}{1}s_0 +\ln\left[\binom{\phi}{\psi}(y_0,s_0)\right]\right\}  \chi_1(x,t_0) \nonumber\\
& + \ln \left\{\left(d_0\binom{f_0(y_0)}{g_0(y_0)} + d_1. \binom{f_1(y_0)}{g_1(y_0)}\right)\frac{A^2}{s_0^2}\chi(16y_0, s_0)\right\}  \chi_1(x,t_0), \label{def:uvt0}
\end{align}
where $s_0 = -\ln(T-t_0)$, $y_0 = x e^{\frac{s_0}{2}}$, $\phi$ and $\psi$ are defined by \eqref{def:pro2}, $\binom{f_0}{g_0}$ and $\binom{f_1}{g_1}$ are the eigenfunctions introduced in Lemma \ref{lemm:diagonal}, $\chi$ is the cut-off function defined by \eqref{def:chi},
$$\chi_1(x,t_0) = \chi_0 \left(\frac{|x|}{|\ln(T-t_0)| \sqrt{T-t_0}}\right) = \chi_0\left(\frac{y_0}{s_0}\right),$$
and $(\hat u_*, \hat v_*) \in \Cc^\infty(\Rb^N \ \{0\}) \times \Cc^\infty(\Rb^N \setminus \{0\})$ is defined by 
\begin{equation*}\label{def:ustar}
\hat u_*(x) = \left\{\begin{array}{ll}
\ln\left(\frac{4(\mu + 1)|\ln |x||}{p|x|^2}\right) &\quad \text{for}\quad |x| \leq C(a),\\
 -\ln\left(1 + a |x|^2\right) &\quad \text{for} \quad |x|\geq 1,
\end{array}
 \right.
\end{equation*}
\begin{equation*}\label{def:vstar}
\hat v_*(x) = \left\{\begin{array}{ll}
\ln\left(\frac{4(\mu + 1)|\ln |x||}{q|x|^2}\right) &\quad \text{for}\quad |x| \leq C(a),\\
 -\ln\left(1 + a |x|^2\right) &\quad \text{for} \quad |x|\geq 1.
\end{array}
 \right.
\end{equation*} 

After having a proper definition of initial data and the shrinking set, the remaining step is to show that there exists $(d_0, d_1) \in \Rb^{1 + N}$ such that system \eqref{sys:uv}-\eqref{def:fg2} with initial data \eqref{def:uvt0} has a unique solution $(u,v) \in \Sc(t)$ for all $t \in [t_0, T)$. The main argument of this step is exactly the same as for the polynomial case, i.e. the proof of Proposition \ref{prop:goalVA}. We refer the interested reader on the reduction to a finite dimensional problem to Section 4 in \cite{GNZjde18} for all details. This concludes the proof of Theorem \ref{theo:2}.


\def\cprime{$'$}

\end{document}